\newtheorem{theorem}{Theorem}
\newtheorem{lemma}{Lemma}
\newtheorem{proposition}{Proposition}
\newtheorem{corollary}{Corollary}
\newtheorem{definition}{Definition}
\newtheorem{remark}{Remark}
\theoremstyle{remark}
\numberwithin{equation}{section}
\def\GRAPHICSPS#1{%
 \ifcase\GRAPHICSTYPE
  ps: #1%
 \or
  language "PS", include "#1"%
 \fi
}%
\def\graffile#1#2#3#4{%
    \leavevmode
    \raise -#4 \BOXTHEFRAME{%
        \hbox to #2{\raise #3\hbox{\null #1}}}%
}%
\def\draftbox#1#2#3#4{%
 \leavevmode\raise -#4 \hbox{%
  \frame{\rlap{\protect\tiny #1}\hbox to #2%
   {\vrule height#3 width\z@ depth\z@\hfil}%
  }%
 }%
}%
\def\GRAPHIC#1#2#3#4#5{%
 \ifnum\draft=\@ne\draftbox{#2}{#3}{#4}{#5}%
  \else\graffile{#1}{#3}{#4}{#5}%
  \fi
 }%
\def\addtoLaTeXparams#1{%
    \edef\LaTeXparams{\LaTeXparams #1}}%
\newif\ifBoxFrame \BoxFramefalse
\newif\ifOverFrame \OverFramefalse
\def\BOXTHEFRAME#1{%
   \hbox{%
      \ifBoxFrame
         \frame{#1}%
      \else
         {#1}%
      \fi
   }%
}
\def\doFRAMEparams#1{\BoxFramefalse\OverFramefalse\readFRAMEparams#1\end}%
\def\readFRAMEparams#1{%
 \ifx#1\end%
  \let\next=\relax
  \else
  \ifx#1i\dispkind=\z@\fi
  \ifx#1d\dispkind=\@ne\fi
  \ifx#1f\dispkind=\tw@\fi
  \ifx#1t\addtoLaTeXparams{t}\fi
  \ifx#1b\addtoLaTeXparams{b}\fi
  \ifx#1p\addtoLaTeXparams{p}\fi
  \ifx#1h\addtoLaTeXparams{h}\fi
  \ifx#1X\BoxFrametrue\fi
  \ifx#1O\OverFrametrue\fi
  \let\next=\readFRAMEparams
  \fi
 \next
 }%
\def\IFRAME#1#2#3#4#5#6{%
      \bgroup
      \parindent=0pt%
      \setbox0 = \hbox{#6}%
      \@tempdima = #1%
      \ifOverFrame
          \typeout{This is not implemented yet}%
          \show\HELP
      \else
         \ifdim\wd0>\@tempdima
            \advance\@tempdima by \@tempdima
            \ifdim\wd0 >\@tempdima
               \textwidth=\@tempdima
               \setbox1 =\vbox{%
                  \noindent\hbox to \@tempdima{\hfill\GRAPHIC{#5}{#4}{#1}{#2}{#3}\hfill}\\%
                  \noindent\hbox to \@tempdima{\parbox[b]{\@tempdima}{#6}}%
               }%
               \wd1=\@tempdima
            \else
               \textwidth=\wd0
               \setbox1 =\vbox{%
                 \noindent\hbox to \wd0{\hfill\GRAPHIC{#5}{#4}{#1}{#2}{#3}\hfill}\\%
                 \noindent\hbox{#6}%
               }%
               \wd1=\wd0
            \fi
         \else
            \hsize=\@tempdima
            \setbox1 =\vbox{%
                \unskip\GRAPHIC{#5}{#4}{#1}{#2}{0pt}%
                \break
                \unskip\hbox to \@tempdima{\hfill #6\hfill}%
            }%
            \wd1=\@tempdima
         \fi
         \@tempdimb=\ht1
         \advance\@tempdimb by \dp1
         \advance\@tempdimb by -#2%
         \advance\@tempdimb by #3%
         \leavevmode
         \raise -\@tempdimb \hbox{\box1}%
      \fi
      \egroup
}%
\def\DFRAME#1#2#3#4#5{%
 \begin{center}
     \ifOverFrame 
        #5\par
     \fi
     \GRAPHIC{#4}{#3}{#1}{#2}{\z@}
     \ifOverFrame \else 
        \par #5
     \fi
 \end{center}%
 }%
\def\FFRAME#1#2#3#4#5#6#7{%
 \begin{figure}[#1]%
  \begin{center}\GRAPHIC{#7}{#6}{#2}{#3}{\z@}\end{center}%
  \caption{\label{#5}#4}%
  \end{figure}%
 }%
\def\FRAME#1#2#3#4#5#6#7#8{%
 \def\LaTeXparams{}%
 \dispkind=\z@
 \def\LaTeXparams{}%
 \doFRAMEparams{#1}%
 \ifnum\dispkind=\z@\IFRAME{#2}{#3}{#4}{#7}{#8}{#5}\else
  \ifnum\dispkind=\@ne\DFRAME{#2}{#3}{#7}{#8}{#5}\else
   \ifnum\dispkind=\tw@
    \edef\@tempa{\noexpand\FFRAME{\LaTeXparams}}%
    \@tempa{#2}{#3}{#5}{#6}{#7}{#8}%
    \fi
   \fi
  \fi
 }%
\def\TEXUX#1{"texux"}
\def\@@eqncr{\let\@tempa\relax
    \ifcase\@eqcnt \def\@tempa{& & &}\or \def\@tempa{& &}%
      \else \def\@tempa{&}\fi
     \@tempa
     \if@eqnsw
        \iftag@
           \@taggnum
        \else
           \@eqnnum\stepcounter{equation}\fi
     \fi
     \global\tag@false
     \global\@eqnswtrue
     \global\@eqcnt\z@\cr}
 \newif\iftag@ \tag@false
 \def\tag{\@ifnextchar*{\@tagstar}{\@tag}}
 \def\@tag#1{%
     \global\tag@true
     \global\def\@taggnum{(#1)}}
 \def\@tagstar*#1{%
     \global\tag@true
     \global\def\@taggnum{#1}%
}
\long\def\QQQ#1#2{%
     \long\expandafter\def\csname#1\endcsname{#2}}%
\long\def\QQA#1#2{}%
\def\QTR#1#2{{\csname#1\endcsname #2}}
\def\EXPAND#1[#2]#3{}%
\def\NOEXPAND#1[#2]#3{}%
\def\LaTeXparent#1{}%
\def\ChildStyles#1{}%
\def\ChildDefaults#1{}%
\def\QTagDef#1#2#3{}%
\def\QQfnmark#1{\footnotemark}
\def\makeatletter\input gnuindex.sty\makeatother\makeindex{\makeatletter\input gnuindex.sty\makeatother\makeindex}%
\def\initial#1{\bigbreak{\raggedright\large\bf #1}\kern 2\p@
   \penalty3000}%
 \def\abstract{%
  \if@twocolumn
   \section*{Abstract (Not appropriate in this style!)}%
   \else \small 
   \begin{center}{\bf Abstract\vspace{-.5em}\vspace{\z@}}\end{center}%
   \quotation 
   \fi
  }%
   \def\registered{\relax\ifmmode{}\r@gistered
                    \else$\m@th\r@gistered$\fi}%
 \def\r@gistered{^{\ooalign
  {\hfil\raise.07ex\hbox{$\scriptstyle\rm\text{R}$}\hfil\crcr
  \mathhexbox20D}}}}{}%
\def\BibTeX{{\rm B\kern-.05em{\sc i\kern-.025em b}\kern-.08em
    T\kern-.1667em\lower.7ex\hbox{E}\kern-.125emX}}%
\newdimen\theight
\def\Column{%
 \vadjust{\setbox\z@=\hbox{\scriptsize\quad\quad tcol}%
  \theight=\ht\z@\advance\theight by \dp\z@\advance\theight by \lineskip
  \kern -\theight \vbox to \theight{%
   \rightline{\rlap{\box\z@}}%
   \vss
   }%
  }%
 }%
\def\qed{%
 \ifhmode\unskip\nobreak\fi\ifmmode\ifinner\else\hskip5\p@\fi\fi
 \hbox{\hskip5\p@\vrule width4\p@ height6\p@ depth1.5\p@\hskip\p@}%
 }%
\def\miss{\hbox{\vrule height2\p@ width 2\p@ depth\z@}}%
\def\tcol#1{{\baselineskip=6\p@ \vcenter{#1}} \Column}  %
\def\newfmtname{LaTeX2e}
\def\chkcompat{%
   \if@compatibility
   \else
     \usepackage{latexsym}
   \fi
}
  \DeclareOldFontCommand{\rm}{\normalfont\rmfamily}{\mathrm}
  \DeclareOldFontCommand{\sf}{\normalfont\sffamily}{\mathsf}
  \DeclareOldFontCommand{\tt}{\normalfont\ttfamily}{\mathtt}
  \DeclareOldFontCommand{\bf}{\normalfont\bfseries}{\mathbf}
  \DeclareOldFontCommand{\it}{\normalfont\itshape}{\mathit}
  \DeclareOldFontCommand{\sl}{\normalfont\slshape}{\@nomath\sl}
  \DeclareOldFontCommand{\sc}{\normalfont\scshape}{\@nomath\sc}
\def\alpha{\Greekmath 010B }%
\def\beta{\Greekmath 010C }%
\def\gamma{\Greekmath 010D }%
\def\delta{\Greekmath 010E }%
\def\epsilon{\Greekmath 010F }%
\def\zeta{\Greekmath 0110 }%
\def\eta{\Greekmath 0111 }%
\def\theta{\Greekmath 0112 }%
\def\iota{\Greekmath 0113 }%
\def\kappa{\Greekmath 0114 }%
\def\lambda{\Greekmath 0115 }%
\def\mu{\Greekmath 0116 }%
\def\nu{\Greekmath 0117 }%
\def\xi{\Greekmath 0118 }%
\def\pi{\Greekmath 0119 }%
\def\rho{\Greekmath 011A }%
\def\sigma{\Greekmath 011B }%
\def\tau{\Greekmath 011C }%
\def\upsilon{\Greekmath 011D }%
\def\phi{\Greekmath 011E }%
\def\chi{\Greekmath 011F }%
\def\psi{\Greekmath 0120 }%
\def\omega{\Greekmath 0121 }%
\def\varepsilon{\Greekmath 0122 }%
\def\vartheta{\Greekmath 0123 }%
\def\varpi{\Greekmath 0124 }%
\def\varrho{\Greekmath 0125 }%
\def\varsigma{\Greekmath 0126 }%
\def\varphi{\Greekmath 0127 }%
\def\nabla{\Greekmath 0272}
\def\GreekBold{\@ne}%
\def\One{\@ne}
\def\Greekmath#1#2#3#4{%
    \ifx\GreekBold\One
        \mathchar"#1#2#3#4%
    \else
		\mbox{\boldmath$\mathchar"#1#2#3#4$}
	\fi}
\let\SAVEPBF=\pbf
\def\pbf{\let\GreekBold = \relax\SAVEPBF}%
\else\message{amstex already loaded}\makeatother \fi
\let\DOTSI\relax
\def\RIfM@{\relax\ifmmode}%
\def\FN@{\futurelet\next}%
\def\iint{\DOTSI\intno@\tw@\FN@\ints@}%
\def\iiint{\DOTSI\intno@\thr@@\FN@\ints@}%
\def\iiiint{\DOTSI\intno@4 \FN@\ints@}%
\def\idotsint{\DOTSI\intno@\z@\FN@\ints@}%
\def\ints@{\findlimits@\ints@@}%
\newif\iflimtoken@
\newif\iflimits@
\def\findlimits@{\limtoken@true\ifx\next\limits\limits@true
 \else\ifx\next\nolimits\limits@false\else
 \limtoken@false\ifx\ilimits@\nolimits\limits@false\else
 \ifinner\limits@false\else\limits@true\fi\fi\fi\fi}%
\def\multint@{\int\ifnum\intno@=\z@\intdots@                          
 \else\intkern@\fi                                                    
 \ifnum\intno@>\tw@\int\intkern@\fi                                   
 \ifnum\intno@>\thr@@\int\intkern@\fi                                 
 \int}
\def\multintlimits@{\intop\ifnum\intno@=\z@\intdots@\else\intkern@\fi
 \ifnum\intno@>\tw@\intop\intkern@\fi
 \ifnum\intno@>\thr@@\intop\intkern@\fi\intop}%
\def\intic@{%
    \mathchoice{\hskip.5em}{\hskip.4em}{\hskip.4em}{\hskip.4em}}%
\def\negintic@{\mathchoice
 {\hskip-.5em}{\hskip-.4em}{\hskip-.4em}{\hskip-.4em}}%
\def\ints@@{\iflimtoken@                                              
 \def\ints@@@{\iflimits@\negintic@
   \mathop{\intic@\multintlimits@}\limits                             
  \else\multint@\nolimits\fi                                          
  \eat@}
 \else                                                                
 \def\ints@@@{\iflimits@\negintic@
  \mathop{\intic@\multintlimits@}\limits\else
  \multint@\nolimits\fi}\fi\ints@@@}%
\def\intkern@{\mathchoice{\!\!\!}{\!\!}{\!\!}{\!\!}}%
\def\plaincdots@{\mathinner{\cdotp\cdotp\cdotp}}%
\def\intdots@{\mathchoice{\plaincdots@}%
 {{\cdotp}\mkern1.5mu{\cdotp}\mkern1.5mu{\cdotp}}%
 {{\cdotp}\mkern1mu{\cdotp}\mkern1mu{\cdotp}}%
 {{\cdotp}\mkern1mu{\cdotp}\mkern1mu{\cdotp}}}%
\def\RIfM@{\relax\protect\ifmmode}
\def\text{\RIfM@\expandafter\text@\else\expandafter\mbox\fi}
\let\nfss@text\text
\def\text@#1{\mathchoice
   {\textdef@\displaystyle\f@size{#1}}%
   {\textdef@\textstyle\tf@size{\firstchoice@false #1}}%
   {\textdef@\textstyle\sf@size{\firstchoice@false #1}}%
   {\textdef@\textstyle \ssf@size{\firstchoice@false #1}}%
   \glb@settings}
\def\textdef@#1#2#3{\hbox{{%
                    \everymath{#1}%
                    \let\f@size#2\selectfont
                    #3}}}
\newif\iffirstchoice@
\def\Let@{\relax\iffalse{\fi\let\\=\cr\iffalse}\fi}%
\def\vspace@{\def\vspace##1{\crcr\noalign{\vskip##1\relax}}}%
\def\multilimits@{\bgroup\vspace@\Let@
 \baselineskip\fontdimen10 \scriptfont\tw@
 \advance\baselineskip\fontdimen12 \scriptfont\tw@
 \lineskip\thr@@\fontdimen8 \scriptfont\thr@@
 \lineskiplimit\lineskip
 \vbox\bgroup\ialign\bgroup\hfil$\m@th\scriptstyle{##}$\hfil\crcr}%
\def\Sb{_\multilimits@}%
\def\endSb{\crcr\egroup\egroup\egroup}%
\def\Sp{^\multilimits@}%
\newdimen\ex@
\def\rightarrowfill@#1{$#1\m@th\mathord-\mkern-6mu\cleaders
 \hbox{$#1\mkern-2mu\mathord-\mkern-2mu$}\hfill
 \mkern-6mu\mathord\rightarrow$}%
\def\leftarrowfill@#1{$#1\m@th\mathord\leftarrow\mkern-6mu\cleaders
 \hbox{$#1\mkern-2mu\mathord-\mkern-2mu$}\hfill\mkern-6mu\mathord-$}%
\def\leftrightarrowfill@#1{$#1\m@th\mathord\leftarrow
\mkern-6mu\cleaders
 \hbox{$#1\mkern-2mu\mathord-\mkern-2mu$}\hfill
 \mkern-6mu\mathord\rightarrow$}%
\def\overrightarrow{\mathpalette\overrightarrow@}%
\def\overrightarrow@#1#2{\vbox{\ialign{##\crcr\rightarrowfill@#1\crcr
 \noalign{\kern-\ex@\nointerlineskip}$\m@th\hfil#1#2\hfil$\crcr}}}%
\def\overleftarrow{\mathpalette\overleftarrow@}%
\def\overleftarrow@#1#2{\vbox{\ialign{##\crcr\leftarrowfill@#1\crcr
 \noalign{\kern-\ex@\nointerlineskip}$\m@th\hfil#1#2\hfil$\crcr}}}%
\def\overleftrightarrow{\mathpalette\overleftrightarrow@}%
\def\overleftrightarrow@#1#2{\vbox{\ialign{##\crcr
   \leftrightarrowfill@#1\crcr
 \noalign{\kern-\ex@\nointerlineskip}$\m@th\hfil#1#2\hfil$\crcr}}}%
\def\underrightarrow{\mathpalette\underrightarrow@}%
\def\underrightarrow@#1#2{\vtop{\ialign{##\crcr$\m@th\hfil#1#2\hfil
  $\crcr\noalign{\nointerlineskip}\rightarrowfill@#1\crcr}}}%
\def\underleftarrow{\mathpalette\underleftarrow@}%
\def\underleftarrow@#1#2{\vtop{\ialign{##\crcr$\m@th\hfil#1#2\hfil
  $\crcr\noalign{\nointerlineskip}\leftarrowfill@#1\crcr}}}%
\def\underleftrightarrow{\mathpalette\underleftrightarrow@}%
\def\underleftrightarrow@#1#2{\vtop{\ialign{##\crcr$\m@th
  \hfil#1#2\hfil$\crcr
 \noalign{\nointerlineskip}\leftrightarrowfill@#1\crcr}}}%
\def\qopnamewl@#1{\mathop{\operator@font#1}\nlimits@}
\let\nlimits@\displaylimits
\def\setboxz@h{\setbox\z@\hbox}
\def\varlim@#1#2{\mathop{\vtop{\ialign{##\crcr
 \hfil$#1\m@th\operator@font lim$\hfil\crcr
 \noalign{\nointerlineskip}#2#1\crcr
 \noalign{\nointerlineskip\kern-\ex@}\crcr}}}}
 \def\rightarrowfill@#1{\m@th\setboxz@h{$#1-$}\ht\z@\z@
  $#1\copy\z@\mkern-6mu\cleaders
  \hbox{$#1\mkern-2mu\box\z@\mkern-2mu$}\hfill
  \mkern-6mu\mathord\rightarrow$}
\def\leftarrowfill@#1{\m@th\setboxz@h{$#1-$}\ht\z@\z@
  $#1\mathord\leftarrow\mkern-6mu\cleaders
  \hbox{$#1\mkern-2mu\copy\z@\mkern-2mu$}\hfill
  \mkern-6mu\box\z@$}
\def\projlim{\qopnamewl@{proj\,lim}}
\def\injlim{\qopnamewl@{inj\,lim}}
\def\varinjlim{\mathpalette\varlim@\rightarrowfill@}
\def\varprojlim{\mathpalette\varlim@\leftarrowfill@}
\def\varliminf{\mathpalette\varliminf@{}}
\def\varliminf@#1{\mathop{\underline{\vrule\@depth.2\ex@\@width\z@
   \hbox{$#1\m@th\operator@font lim$}}}}
\def\varlimsup{\mathpalette\varlimsup@{}}
\def\varlimsup@#1{\mathop{\overline
  {\hbox{$#1\m@th\operator@font lim$}}}}
\def\dint{\displaystyle \int }%
\def\align{\@verbatim \frenchspacing\@vobeyspaces \@alignverbatim
You are using the "align" environment in a style in which it is not defined.}
\let\csname endalign*\endcsname =\endtrivlist
\def\alignat{\@verbatim \frenchspacing\@vobeyspaces \@alignatverbatim
You are using the "alignat" environment in a style in which it is not defined.}
\let\csname endalignat*\endcsname =\endtrivlist
\def\xalignat{\@verbatim \frenchspacing\@vobeyspaces \@xalignatverbatim
You are using the "xalignat" environment in a style in which it is not defined.}
\let\csname endxalignat*\endcsname =\endtrivlist
\def\gather{\@verbatim \frenchspacing\@vobeyspaces \@gatherverbatim
You are using the "gather" environment in a style in which it is not defined.}
\let\csname endgather*\endcsname =\endtrivlist
\def\multiline{\@verbatim \frenchspacing\@vobeyspaces \@multilineverbatim
You are using the "multiline" environment in a style in which it is not defined.}
\let\csname endmultiline*\endcsname =\endtrivlist
\def\arrax{\@verbatim \frenchspacing\@vobeyspaces \@arraxverbatim
You are using a type of "array" construct that is only allowed in AmS-LaTeX.}
\def\tabulax{\@verbatim \frenchspacing\@vobeyspaces \@tabulaxverbatim
You are using a type of "tabular" construct that is only allowed in AmS-LaTeX.}
\let\csname endarrax*\endcsname =\endtrivlist
\let\csname endtabulax*\endcsname =\endtrivlist
\begin{document}
\title[PARTIAL NULL-CONTROLLABILITY OF EVOLUTION EQUATIONS]{\textbf{PARTIAL
NULL-CONTROLLABILITY OF EVOLUTION EQUATIONS BY ONE-DIMENSIONAL CONTROL}}
\author{B. Shklyar}
\address{Department of Exact Sciences, Holon Academic Institute of
Technology, Holon 58102, Israel}
\email{shk\_b@hit.ac.il}

\begin{abstract}
The problem of partial null controllability for linear autonomous evolution
equations, which are controlled by a one-dimensional control, is under
consideration. The partial null-controllability conditions for coupled
abstract evolution systems have been obtained using the moment problem
approach.
\end{abstract}

\maketitle
\tableofcontents

\textbf{Keywords:} Partial null-controllability of evolution equations,
coupled systems, eigenvalue problems, moment method, strongly minimal
sequences.

\textbf{AMS(MOS) Subject Classification}: 93B05, 93B60

\section{Introduction and problem statement}

Partial controllability is the ability to exert control or influence over
only a subset of components or entities within a distributed system, while
leaving others unaffected. This can be useful in scenarios where certain
components need to be modified or managed independently, while others remain
unchanged.

For example, in a distributed database system, partial controllability may
involve the ability to reach data in specific partitions or shards, while
leaving other untouched. In a robotic system, partial controllability may
involve the ability to control the motion of a subset of robots while
allowing others to operate autonomously based on their local sensors and
algorithms.

The controllability conditions for first-order differential control systems
are well-studied. However, high-order differential equations, partial
hyperbolic equations, delay equations, and many others can be written as a
first-order differential equation only by enlarging the dimension of the
state space. In many cases, it is necessary to modify only components of the
original state space, without affecting the rest of state space. This is why
the concept of partial controllability is important.

Let $X,Y,U$ be separable Hilbert spaces. Consider the evolution equation%
\begin{equation}
\dot{x}(t)=Ax(t)+bu\left( t\right) ,  \tag{e1}  \label{e1}
\end{equation}%
\begin{equation}
x(0)=x_{0},  \tag{e2}  \label{e2}
\end{equation}%
\begin{equation}
y(t)=Px(t),t\geq 0,  \tag{e3}  \label{e3}
\end{equation}%
where $x(t)\in X$ is a current state, $x_{0}\in X$ is an initial state; the
operator $A$ generates a strongly continuous $C_{0}$-semigroup $S(t)$ of
bounded operators \cite{HillePhillps}, $b\in X, P:X\rightarrowtail Y$ is a
linear bounded operator The input function $u\left( t\right) $ is assumed to
be locally \ $L^{2}\,$-integrated.

Following \cite{AmmarKhodjaChoulyDuprez} we assume, that the observation
operator $P:X\mathbb{\rightarrowtail }Y$ is an orthogonal projection
operator,

Denote by \ $x\left( t,x_{0},u\left( \cdot \right) \right) $ the mild
solution of equation (\ref{e1}) with initial condition (\ref{e2}).

\begin{definition}
Equation (\ref{e1})--(\ref{e3}) $\ $ is said to be completely
null-controllable on $[0,t_{1}]$, if for each $x_{0}\in X$ there is a
control $u(t),u(\cdot )\in L([0,t_{1}],U)$, such that the corresponding
solution $x\left( t,x_{0},u\left( \cdot \right) \right) $ satisfies the
condition \ 
\begin{equation*}
x\left( t_{1},x_{0},u\left( \cdot \right) \right) =0.
\end{equation*}
\end{definition}

\begin{definition}
\label{D2.1}Equation (\ref{e1})--(\ref{e3}) $\ $ is said to be partially
null-controllable on $[0,t_{1}]$, if for each $x_{0}\in X$ there is a
control $u(t),u(\cdot )\in L([0,t_{1}],U)$, such that the corresponding
solution $x\left( t,x_{0},u\left( \cdot \right) \right) $ satisfies the
condition \ 
\begin{equation}
Px\left( t_{1},x_{0},u\left( \cdot \right) \right) =0.  \tag{e5.1}
\label{e5.1}
\end{equation}
\end{definition}

Further for the sake of brevity we use $x\left( t\right) \ $instead of $%
x\left( t,x_{0},u\left( \cdot \right) \right) .$

The partial controllability has been well investigated for
finite-dimensional systems (see, for example, \cite{GabasovKirillova} and
references therein). However, as far as we know there are very few
investigations on the partial controllability of infinite-dimensional
equations (see \cite{BashirovMahmudovSemiEtikan}, \cite%
{AmmarKhodjaChoulyDuprez} and reference therein).

In recent years, there has been an increasing interest in the study of the
controllability of coupled PDEs. Partial controllability of such systems is
one of important applications to control of these systems. In \cite%
{AmmarKhodjaChoulyDuprez} partial controllability conditions have been
obtained for coupled parabolic systems (see also references therein).
Controllability conditions of \cite{AvdoninRiveroTeresa} (see also
references therein) may be considered as partially null controllability \
conditions for coupled hyperbolic (namely waved) equations, where only
initial condition conditions $x_{0}\in Y$ are considered.

The moment method is a very powerful tool for investigation of (complete)
controllability for linear evolution systems (see \cite{AvdoninIvanov}, \cite%
{AmmarKhodja}, \cite{AvdoninRiveroTeresa} and references therein).

The main goal of this paper is to obtain results related to partial
controllability issues for linear evolution equations using the moment
method. The paper aims to provide insights into controlling such systems and
may discuss aspects related to hyperbolic and parabolic systems. The moment
method is a mathematical technique employed to analyze and establish
controllability in these systems. 


We assume that $A$ has the properties:

(I) the operator $A$ has a purely point spectrum $\sigma _{A}$ which is
either finite or has no finite limit points, and each $\lambda \in \sigma
_{A}$ has a finite multiplicity; %

(II) the family of generalized eigenvectors of the operator $A$ forms a
Riesz basic of $X.$


It is well-known, that $x\left( t\right) $ is defined by the formula 
\footnote{%
The integral is understood in the Bochner sense.}:

\begin{equation}
x\left( t\right) =S(t)x_{0}+\int_{0}^{t}S(t-\tau )bu(\tau )d\tau . 
\tag{eqsol}  \label{eqsol}
\end{equation}



%
%

\section{Coupled evolution systems}




%

Let $X=Y\times Z,$ where $Y$ and $Z$ are separable Hilbert spaces\footnote{%
We will not distinguish between spaces $X$ and $X\times \{0\}, Y$ and $%
\{0\}\times Y$.}. Consider the linear evolution system 
\begin{equation}
\dot{y}\left( t\right) =A_{Y}y\left( t\right) +A_{YZ}z\left( t\right)
+b_{Y}u\left( t\right) ,  \tag{CS1 }  \label{CS1}
\end{equation}%
\begin{equation}
\dot{z}\left( t\right) \ =~~~~~~~~~A_{Z}z\left( t\right) ~+b_{Z}u\left(
t\right) .  \tag{CS2}  \label{CS2}
\end{equation}%
\begin{equation}
P=\left( 
\begin{array}{cc}
I_{Y} & 0 \\ 
0 & 0%
\end{array}%
\right) ,  \tag{P}  \label{P}
\end{equation}%
where the operators $A_{Y}$ and $A_{Z}$ generate strongly continuous
semigroups $S_{Y}\left( t\right) ~$and $S_{Z}\left( t\right) $ in $X$ and Y
correspondingly, and $S_{YZ}\left( t\right) :Z\rightarrowtail Y$ is a
bounded strongly continuous operator.

This system is a partial case of system (\ref{e1})--(\ref{e3}), where $%
x=\left( 
\begin{array}{c}
y \\ 
z%
\end{array}%
\right) ,y\in Y,z\in Z,~A=\left( 
\begin{array}{cc}
A_{Y}, & A_{YZ} \\ 
0 & A_{Z}%
\end{array}%
\right) ,\ A^{\ast }=\left( 
\begin{array}{cc}
A_{Y}^{\ast }, & 0 \\ 
A_{YZ}^{\ast } & A_{Z}^{\ast }%
\end{array}%
\right) ,$ $b=\left( 
\begin{array}{c}
b_{Y} \\ 
b_{Z}%
\end{array}%
\right) .$

We assume, that eigenvalues of the operators $A_{Y}$ and $A_{Z}$ are simple,
and the operator

$A=\left( 
\begin{array}{cc}
A_{Y}, & A_{YZ} \\ 
0 & A_{Z}%
\end{array}%
\right) $ satisfies the conditions (I)--(II).

The linear bounded operator $A_{YZ}$ will be called a couple operator.

We have 
\begin{equation*}
S\left( t\right) =\left( 
\begin{array}{cc}
S_{Y}\left( t\right) , & S_{YZ}\left( t\right) \\ 
0, & S_{Z}\left( t\right)%
\end{array}%
\right) ,
\end{equation*}%
where $S_{Y}\left( t\right) ~$and $S_{Z}\left( t\right) $ are strongly
continuous semigroups, generated by the operators $A_{Y}$ and $A_{Z}$
correspondingly, and $S_{YZ}\left( t\right) :Z\rightarrowtail Y$ is a
bounded strongly continuous operator.

Denote by $\sigma _{Y}$ the spectrum of the operator $A_{Y},$ and by $\sigma
_{Z}$ the spectrum of the operator $A_{Z}.$

Denote also: $I_{Y}:=\left\{ i\in \mathbb{N}:\lambda _{i}\in \sigma
_{Y}\right\} ,I_{Z}:=\left\{ i\in \mathbb{N}:\lambda _{i}\notin \sigma
_{Y}\right\} .$ Obviously, $\sigma _{Y}\cup \sigma _{Z}=\mathbb{\sigma }%
_{A},~I_{Y}\cup I_{Z}=\mathbb{N}.$

Denote $\sigma _{YZ}=\sigma _{Y}\cap \sigma _{Z}.$

\section{The case $\protect\sigma _{YZ}=\varnothing $ \ \ \ \ }

\subsection{Eigenvectors and eigenvalues}

In the beginning consider the case $\sigma _{YZ}=\varnothing $.~{\small %
Obviously},$~I_{Y}\cap I_{Z}=\varnothing $~{\small also}$.$

\subsubsection{Eigenvectors of operator $A$}

Let $\varphi _{i}=\left( 
\begin{array}{c}
\varphi _{Yi} \\ 
\varphi _{Yi}%
\end{array}%
\right) ,i\in \mathbb{N}$ be the eigenvector of the operator $A,$
corresponding to its eigenvalue $\lambda _{i},i\in \mathbb{N}.$ We have

$A\varphi _{i}=\left( 
\begin{array}{c}
A_{Y}\varphi _{Yi}+A_{YZ}\varphi _{Zi} \\ 
A_{Z}\varphi _{Zi}%
\end{array}%
\right) =\lambda _{i}\left( 
\begin{array}{c}
\varphi _{Yi} \\ 
\varphi _{Yi}%
\end{array}%
\right) ,i\in \mathbb{N}.$

Hence, 
\begin{eqnarray*}
A_{Y}\varphi _{Yi}+A_{YZ}\varphi _{Zi} &=&\lambda _{i}\varphi _{Yi},i\in 
\mathbb{N}, \\
A_{Z}\varphi _{Zi} &=&\lambda _{i}\psi _{Zi},i\in \mathbb{N}.
\end{eqnarray*}

1) Let ~$j\in I_{Y}\iff \lambda _{j}\in \sigma _{Y},~$ and let $\varphi
_{Yj} $ be a corresponding eigenvector of the operator $A_{Y}.$ In this case
the vector $\varphi _{j}=\left( 
\begin{array}{c}
\varphi _{Yj} \\ 
0%
\end{array}%
\right) $ is an eigenvector of the operator $A.$

2) Let $k\in I_{Z}\iff \lambda _{k}\in \sigma _{Z},$~and~let $\varphi
_{Zk}\neq 0~$be a corresponding eigenvector of the operator $A_{Z}.$ In this
case the vector $\varphi _{k}=\left( 
\begin{array}{c}
\varphi _{Yk} \\ 
\varphi _{Zk}%
\end{array}%
\right) ,~$where$~$the vector $\varphi _{Yk}~$\ \ is a solution of the
operator equation 
\begin{equation}
A_{Y}\varphi _{Yk}+A_{YZ}\varphi _{Zk}=\lambda _{k}\varphi _{Yk},\varphi
_{Zk}\neq 0,k\in I_{\mathbb{Z}},  \tag{AphiYk}  \label{AphiYk}
\end{equation}%
is an eigenvector of the operator $A.$

\begin{remark}
Since $\lambda _{k}\ $is a regular value of the operator $A_{Y},$ the
operator equation (\ref{AphiYk}) is always solvable. Its solution is: 
\begin{equation}
\varphi _{Yk}=\left( A_{Y}-\lambda _{k}I_{Y}\right) ^{-1}A_{YZ}\varphi
_{Zk},k\in I_{\mathbb{Z}}.  \tag{phiYk}  \label{phiYk}
\end{equation}
\end{remark}

\subsubsection{Eigenvectors of adjoint operator $A^{\ast }$}

Let $\psi _{i}=\left( 
\begin{array}{c}
\psi _{Yi} \\ 
\psi _{Zi}%
\end{array}
\right),i\in \mathbb{N}$ be the eigenvector of the operator $A^{\ast },$
corresponding to its eigenvalue $\bar{\lambda}_{i},i\in \mathbb{N}.$ We have

$A^{\ast }\psi _{i}=\left( 
\begin{array}{c}
A_{Y}^{\ast }\psi _{Yi} \\ 
A_{YZ}^{\ast }\psi _{Yi}+A_{Z}^{\ast }\psi _{Zi}%
\end{array}%
\right) =\bar{\lambda}_{i}\left( 
\begin{array}{c}
\psi _{Yi} \\ 
\psi _{Zi}%
\end{array}%
\right) ,i\in \mathbb{N}.$

Hence, 
\begin{eqnarray*}
A_{Y}^{\ast }\psi _{Yi} &=&\bar{\lambda}_{i}\psi _{Yi},i\in \mathbb{N}, \\
A_{YZ}^{\ast }\psi _{Yi}+A_{Z}^{\ast }\psi _{Zi} &=&\bar{\lambda}_{i}\psi
_{Zi},i\in \mathbb{N}.
\end{eqnarray*}

1) Let ~$k\in I_{Z}\iff \lambda _{k}\in \sigma _{Z},$ and let $\psi _{Zk}$
be a corresponding eigenvector of the operator $A_{Z}^{\ast }.$ In this case
the vector $\psi _{k}=\left( 
\begin{array}{c}
0 \\ 
\psi _{Zk}%
\end{array}%
\right) $ is an eigenvector of the operator $A^{\ast }.$

2) Let~$j\in I_{Y}\iff \lambda _{j}\in \sigma _{Y},$ and let $\psi _{Zj}$ be
a corresponding eigenvector of the operator $A_{Y}^{\ast }.$ In this case
the vector $\psi _{j}=\left( 
\begin{array}{c}
\psi _{Yj} \\ 
\psi _{Zj}%
\end{array}%
\right) ,$ where the vector $\psi _{Zj}~$\ \ is a solution of the equation 
\begin{equation}
A_{YZ}^{\ast }\psi _{Yj}+A_{Z}^{\ast }\psi _{Zj}=\bar{\lambda}_{j}\psi
_{Zj},j\in I_{Y},  \tag{A*psiZj}  \label{A*psiZj}
\end{equation}%
is an eigenvector of the operator $A^{\ast }.$

\begin{remark}
Since $\bar{\lambda}_{j}\ $is a regular value of the operator $A_{Z}^{\ast
}, $ the operator equation (\ref{A*psiZj}) is always solvable. Its solution
is: 
\begin{equation}
\psi _{Zj}=\left( A_{Z}^{\ast }-\bar{\lambda}_{k}I_{Z}\right)
^{-1}A_{YZ}^{\ast }\psi _{Yj},0,k\in I_{\mathbb{Z}}.  \tag{psiZj}
\label{psiZj}
\end{equation}
\end{remark}

Denote: $b_{i}=\left( b,\psi _{i}\right) ,~i\in \mathbb{N}.$

We have 
\begin{equation*}
b_{i}=\left( 
\begin{array}{c}
b_{Yi} \\ 
b_{Zi}%
\end{array}%
\right) ,i\in \mathbb{N}.
\end{equation*}%
where 
\begin{equation}
b_{Yi}=\left( b_{Y},\psi _{Yi}\right) ,~b_{Zi}=\left( b_{Z},\psi
_{Zi}\right) ,i\in \mathbb{N}.  \tag{bi}  \label{bi}
\end{equation}%
If $k\in I_{Z},$ then $\psi _{Yk}=0,$ so $b_{k}=\left( 
\begin{array}{c}
0 \\ 
b_{Zk}%
\end{array}%
\right) .$

The eigenvectors $\varphi _{i},\psi _{i},i\in \mathbb{N}$ of operators $A~$%
and $A^{\ast }$ are chosen such that following orthogonality conditions hold:

\begin{equation*}
\left( \varphi _{m},\psi _{n}\right) =\delta _{mn},~m,n\in \mathbb{N}.
\end{equation*}%
Hence 
\begin{equation}
\left( \varphi _{Yj},\psi _{Yl}\right) = \delta _{jl},j,l\in I_{Y}, 
\tag{ortY}  \label{ortY}
\end{equation}
\begin{equation}
\left( \varphi _{Zk},\psi _{Zm}\right) = \delta _{km},k,m\in I_{Z}, 
\tag{ortZ}  \label{ortZ}
\end{equation}
\begin{equation}
\left( \varphi _{Yk},\psi _{Yj}\right) = -\left( \varphi _{Zk},\psi
_{Zj}\right) ,k\in I_{Z},j\in I_{Y}.  \tag{ortYZ}  \label{ortYZ}
\end{equation}

Denote:

$\Phi _{Y}:=\left\{ \varphi _{j}=\left( 
\begin{array}{c}
\varphi _{Yj} \\ 
0%
\end{array}%
\right) ,j\in I_{Y}\right\} ,\Phi _{Z}:=\left\{ \varphi _{k}=\left( 
\begin{array}{c}
\varphi _{Yk} \\ 
\varphi _{Zk}%
\end{array}%
\right) ,k\in I_{Z}\right\} ,$

$\Psi _{Y}:=\left\{ \psi _{j}=\left( 
\begin{array}{c}
\psi _{Yj} \\ 
\psi _{Zj}%
\end{array}%
\right) ,j\in I_{Y}\right\} ,\Psi _{Z}:=\left\{ \psi _{k}=\left( 
\begin{array}{c}
0 \\ 
\psi _{Zk}%
\end{array}%
\right) ,k\in I_{Z}\right\} $

\begin{lemma}
\label{L1} The sequences $\left\{ \varphi _{Yj},j\in I_{Y}\right\} $ and $%
\left\{ \psi _{Yj},j\in I_{Y}\right\} $ are Riesz basics of the space $Y.$

The sequences $\left\{ \varphi _{Zk},k\in I_{Z}\right\} $ and $\left\{ \psi
_{Zk},k\in I_{Z}\right\} $ are Riesz basics of the space $Z.$
\end{lemma}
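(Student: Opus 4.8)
The plan is to exploit hypothesis (II): the eigenvector family $\left\{ \varphi _{i},i\in \mathbb{N}\right\} $ and its biorthogonal partner $\left\{ \psi _{i},i\in \mathbb{N}\right\} $ are Riesz bases of $X=Y\times Z$, with $\left( \varphi _{m},\psi _{n}\right) =\delta _{mn}$. I would combine this with the standard fact that any subfamily of a Riesz basis is itself a Riesz basis of its closed linear span, and that complementary subfamilies split the space into a topological direct sum with bounded oblique projections. Applied to the partition $\mathbb{N}=I_{Y}\sqcup I_{Z}$, this gives $X=X_{Y}\oplus X_{Z}$, where $X_{Y}:=\overline{\operatorname{span}}\left\{ \varphi _{j}:j\in I_{Y}\right\} $ and $X_{Z}:=\overline{\operatorname{span}}\left\{ \varphi _{k}:k\in I_{Z}\right\} $, with $\left\{ \varphi _{j}:j\in I_{Y}\right\} $ and $\left\{ \varphi _{k}:k\in I_{Z}\right\} $ Riesz bases of $X_{Y}$ and $X_{Z}$ respectively, and with bounded projections $P_{X_{Y}}v=\sum_{j\in I_{Y}}\left( v,\psi _{j}\right) \varphi _{j}$ and $P_{X_{Z}}v=\sum_{k\in I_{Z}}\left( v,\psi _{k}\right) \varphi _{k}$.

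The crux is identifying these abstract spans with coordinate subspaces. Since $\varphi _{j}=\begin{pmatrix}\varphi _{Yj}\\0\end{pmatrix}$ for $j\in I_{Y}$, plainly $X_{Y}\subseteq Y\times \{0\}$. For the reverse inclusion I would take $\begin{pmatrix}y\\0\end{pmatrix}$ and write it as $P_{X_{Y}}\begin{pmatrix}y\\0\end{pmatrix}+P_{X_{Z}}\begin{pmatrix}y\\0\end{pmatrix}$; because $\psi _{k}=\begin{pmatrix}0\\\psi _{Zk}\end{pmatrix}$ for $k\in I_{Z}$, each coefficient $\left( \begin{pmatrix}y\\0\end{pmatrix},\psi _{k}\right) $ vanishes, so $P_{X_{Z}}\begin{pmatrix}y\\0\end{pmatrix}=0$ and $\begin{pmatrix}y\\0\end{pmatrix}\in X_{Y}$. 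Hence $X_{Y}=Y\times \{0\}$, and under the isometry $Y\cong Y\times \{0\}$ the family $\left\{ \varphi _{Yj}:j\in I_{Y}\right\} $ is a Riesz basis of $Y$.

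For $\left\{ \varphi _{Zk}:k\in I_{Z}\right\} $ I would use the coordinate projection $Q_{Z}\begin{pmatrix}y\\z\end{pmatrix}=z$, which satisfies $Q_{Z}\varphi _{k}=\varphi _{Zk}$ and $\ker Q_{Z}=Y\times \{0\}=X_{Y}$. From $X=X_{Y}\oplus X_{Z}$ it follows that $Q_{Z}$ restricts to a bounded bijection $X_{Z}\rightarrow Z$: it is injective because $X_{Z}\cap \ker Q_{Z}=X_{Z}\cap X_{Y}=\{0\}$, and surjective because every $\begin{pmatrix}0\\z\end{pmatrix}$ has an $X_{Z}$-component that $Q_{Z}$ sends to $z$. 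By the open mapping theorem this restriction is a topological isomorphism, and since a bounded invertible operator carries a Riesz basis to a Riesz basis, $\left\{ \varphi _{Zk}\right\} =\left\{ Q_{Z}\varphi _{k}\right\} $ is a Riesz basis of $Z$.

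The two statements about the adjoint family $\left\{ \psi _{i}\right\} $ follow by the symmetric argument, the block lower-triangular form of $A^{\ast }$ interchanging the roles of $Y$ and $Z$; here the expansion coefficients are $\left( v,\varphi _{n}\right) $ rather than $\left( v,\psi _{n}\right) $. Since $\psi _{k}=\begin{pmatrix}0\\\psi _{Zk}\end{pmatrix}$ for $k\in I_{Z}$ while $\varphi _{j}\in Y\times \{0\}$ for $j\in I_{Y}$, the same vanishing computation gives $\overline{\operatorname{span}}\left\{ \psi _{k}:k\in I_{Z}\right\} =\{0\}\times Z$, so $\left\{ \psi _{Zk}\right\} $ is a Riesz basis of $Z$; projecting the complementary span onto $Y$ via $Q_{Y}\begin{pmatrix}y\\z\end{pmatrix}=y$ then yields that $\left\{ \psi _{Yj}\right\} $ is a Riesz basis of $Y$. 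I expect the only genuine obstacle to be the identification $X_{Y}=Y\times \{0\}$ and its dual, i.e. verifying that the oblique-projection coefficients really vanish on the coordinate subspaces; once that is secured, every Riesz-basis conclusion is immediate from invariance of Riesz bases under bounded isomorphisms.
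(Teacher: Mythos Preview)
Your argument is correct. The identification $X_{Y}=Y\times\{0\}$ via the vanishing of the $\psi_{k}$-coefficients, together with the isomorphism $Q_{Z}\vert_{X_{Z}}:X_{Z}\to Z$, is a clean way to push the Riesz-basis property of $\{\varphi_i\}$ down to the coordinate factors; the dual computation for $\{\psi_i\}$ goes through exactly as you say.

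The paper, by contrast, gives no argument at all beyond citing Theorem~2.1 of Stoeva (a characterization of Riesz bases through biorthogonal Bessel sequences, in the spirit of Bari). Presumably the intended verification is: $\{\varphi_{Yj}\}$ and $\{\psi_{Yj}\}$ are biorthogonal in $Y$ by (\ref{ortY}), complete in $Y$ (since $\{\varphi_{Yj}\}$ are the eigenvectors of $A_Y$), and each is Bessel because it arises as a coordinate of a Riesz basis of $X$; Stoeva's criterion then yields the conclusion. Your route is more self-contained and avoids invoking an external characterization, at the cost of spelling out the oblique-projection machinery; the paper's route is shorter on the page but leaves all the actual work (completeness, Bessel bounds) to the reader.
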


\begin{proof}
This follows from results of \cite{Stoeva} Theorem 2.1 (see also \cite{Bari}%
). 
\end{proof}


\section{Partial controllability conditions}

Below all definitions and notations of strongly minimal sequences belong to 
\cite{Taldykin}.

Let 
\begin{equation}
\left\{ x_{1},x_{2},...,x_{n},...\right\}  \tag{s}  \label{s}
\end{equation}
be a minimal sequence of \ elements of a separable Hilbert space $H.$

The infinite-dimensional matrix\ 
\begin{equation}
\left\{ \varphi _{ij}=(x_{i},x_{j}),i,j\in \mathbb{N}\right\}  \tag{GM}
\label{GM}
\end{equation}
is called a Gram matrix of family (\ref{s}).\newline

Let 
\begin{equation}
G_{n}=\left\{ \varphi _{ij}=(x_{i},x_{j}),i,j=1,2,...,n\right\}  \tag{GMn}
\label{GMn}
\end{equation}%
be $n\times n$ cutout of (\ref{GM}), and let $0\leq \lambda _{1}^{n}\leq
\lambda _{2}^{n}\leq ...\leq \lambda _{n}^{n}$ be eigenvalues of (\ref{GMn})%
\footnote{%
Matrix (\ref{GMn}) is a positive Hermitian matrix for any $n\in \mathbb{N}.$}%
. It is well-known, that $\lambda _{1}^{n}\geq 0$ is not increasing, $%
\lambda _{n}^{n}$ is not decreasing, so there exists $\lim\limits_{n%
\rightarrow \infty }\lambda _{1}^{n}\geq 0.$

\begin{definition}
If $\lim\limits_{n\rightarrow \infty }\lambda _{1}^{n}>0,$ the sequence (\ref%
{s}) is called strongly minimal sequence\footnote{%
The definition of strongly minimal sequences is equivalent to the definition
of Riesz-Fisher sequences \cite{CasazzaChristensenLindner}, \cite{Young80}.
In our opinion,the strong minimality concept better explains the
relationship of such sequences with minimal systems.}.
\end{definition}

Denote:

$b_{j}\left( t\right) =b_{j}-e^{\lambda _{j}t}e^{-\lambda _{j}t_{1}}\left(
S_{Z}\left( t_{1}-t\right) b_{Z,},\psi _{Zj}\right) ,j\in I_{Y},~t\in \left[
0,t_{1}\right] .$

\begin{theorem}
\label{T1} Let the \ series 
\begin{equation}
\sum\limits_{j\in \mathbb{I}_{Y}}e^{-2\lambda _{j}t_{1}}\left\vert \left(
S_{Z}\left( t_{1}\right) x_{Z}^{0},\psi _{Zj}\right) \right\vert ^{2} 
\tag{x\_z(t1)}  \label{x_z(t1)}
\end{equation}%
converges for any $x_{Z}^{0}\in Z$.

Equation (\ref{CS1}) is partially null controllable on $[0,t_{1}]$~ if and
only if the family of exponents%
\begin{equation}
\left\{ e^{-\lambda _{j}t}b_{j}\left( t\right) ,j\in I_{Y},t\in \left[
0,t_{1}\right] \right\}  \tag{-exp}  \label{-exp}
\end{equation}%
is strongly minimal.
\end{theorem}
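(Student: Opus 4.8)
The plan is to recast partial null-controllability of (\ref{CS1})--(\ref{CS2}) as an exponential moment problem on $[0,t_1]$ and then invoke the duality between solvability of such a problem for arbitrary $\ell^2$-data and strong minimality of the generating family. First I would reduce the geometric requirement $Px(t_1)=0$, i.e. $y(t_1)=0$, to a countable family of scalar equations. By Lemma \ref{L1} the sequence $\{\psi_{Yj},\,j\in I_Y\}$ is a Riesz basis of $Y$, so its span is dense and $y(t_1)=0$ holds if and only if $(y(t_1),\psi_{Yj})=0$ for every $j\in I_Y$. Writing $\psi_j=(\psi_{Yj},\psi_{Zj})$ for $j\in I_Y$ and using the block structure of $x(t_1)=(y(t_1),z(t_1))$ gives the identity
\begin{equation*}
(y(t_1),\psi_{Yj})=(x(t_1),\psi_j)-(z(t_1),\psi_{Zj}),\qquad j\in I_Y,
\end{equation*}
so the whole problem is governed by the two moment families $(x(t_1),\psi_j)$ and $(z(t_1),\psi_{Zj})$.

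Next I would evaluate these moments from the variation-of-constants formula (\ref{eqsol}). Since $\psi_j$ is an eigenvector of $A^{\ast}$ with eigenvalue $\bar\lambda_j$, one has $S(t)^{\ast}\psi_j=e^{\bar\lambda_j t}\psi_j$, whence
\begin{equation*}
(x(t_1),\psi_j)=e^{\lambda_j t_1}(x_0,\psi_j)+b_j\,e^{\lambda_j t_1}\int_0^{t_1}e^{-\lambda_j\tau}u(\tau)\,d\tau,
\end{equation*}
while the decoupled equation (\ref{CS2}) yields
\begin{equation*}
(z(t_1),\psi_{Zj})=(S_Z(t_1)x_Z^0,\psi_{Zj})+\int_0^{t_1}(S_Z(t_1-\tau)b_Z,\psi_{Zj})\,u(\tau)\,d\tau.
\end{equation*}
Subtracting these and dividing by $e^{\lambda_j t_1}$, the kernel multiplying $u(\tau)$ collapses to exactly $e^{-\lambda_j\tau}b_j(\tau)$; this is precisely the algebraic content of the definition of $b_j(t)$ stated just before the theorem, and it is the single identity that makes the family (\ref{-exp}) surface. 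Consequently $y(t_1)=0$ is equivalent to the moment problem
\begin{equation*}
\int_0^{t_1}e^{-\lambda_j\tau}b_j(\tau)\,u(\tau)\,d\tau=c_j,\qquad c_j:=e^{-\lambda_j t_1}(S_Z(t_1)x_Z^0,\psi_{Zj})-(x_0,\psi_j),\quad j\in I_Y.
\end{equation*}

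Finally I would identify the admissible right-hand sides. Because $\{\psi_i\}$ is a Riesz basis of $X$, the sequence $((x_0,\psi_j))_{j\in I_Y}$ lies in $\ell^2$ for every $x_0$, and the convergence hypothesis (\ref{x_z(t1)}) guarantees that $(e^{-\lambda_j t_1}(S_Z(t_1)x_Z^0,\psi_{Zj}))_{j\in I_Y}\in\ell^2$; hence $(c_j)\in\ell^2$ always. Conversely, taking $x_Z^0=0$ and varying the $Y$-component of $x_0$, the term $-(x_0,\psi_j)=-(y_0,\psi_{Yj})$ sweeps out all of $\ell^2$ since $\{\psi_{Yj}\}$ is a Riesz basis of $Y$, so $x_0\mapsto(c_j)$ maps onto $\ell^2$. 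Therefore (\ref{CS1}) is partially null-controllable for every $x_0$ if and only if the moment problem is solvable in $L^2[0,t_1]$ for every $\ell^2$ right-hand side, i.e. the family (\ref{-exp}) is a Riesz--Fisher sequence, which by the equivalence recorded after the definition of strong minimality (\cite{Taldykin}) is the same as strong minimality of (\ref{-exp}). The steps requiring care are the kernel identity that manufactures $b_j(t)$ and the harmless complex conjugation of the family induced by the $L^2$ inner product, which leaves the Gram-matrix spectrum and hence strong minimality unchanged; the main obstacle is the use of hypothesis (\ref{x_z(t1)}) to keep the data sequence inside $\ell^2$, for without it the targets $(c_j)$ may escape $\ell^2$ and the equivalence with strong minimality would no longer hold.
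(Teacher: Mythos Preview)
Your proof is correct and reaches exactly the same moment problem (\ref{LMP}) as the paper, with the same conclusion via the Riesz--Fischer/strong-minimality equivalence. The route differs in a useful way: the paper expands $S(t_1)x^0$ and $S(t_1-\tau)b$ in the full Riesz basis $\{\varphi_i\}$, projects on $Y$, and then invokes the biorthogonality relation $(\varphi_{Yk},\psi_{Yj})=-(\varphi_{Zk},\psi_{Zj})$ together with the identity $S_Z(t)z=\sum_{k\in I_Z}e^{\lambda_k t}(z,\psi_{Zk})\varphi_{Zk}$ to collapse the $I_Z$-series into $(S_Z(t_1)x_Z^0,\psi_{Zj})$ and $(S_Z(t_1-\tau)b_Z,\psi_{Zj})$. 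You bypass all of that by the single block identity $(y(t_1),\psi_{Yj})=(x(t_1),\psi_j)-(z(t_1),\psi_{Zj})$ and by computing $(x(t_1),\psi_j)$ via the adjoint eigen-relation $S(t)^{\ast}\psi_j=e^{\bar\lambda_j t}\psi_j$, while $(z(t_1),\psi_{Zj})$ comes straight from the decoupled equation (\ref{CS2}); this is shorter and makes the emergence of $b_j(t)$ transparent. You also make explicit the surjectivity of $x_0\mapsto(c_j)$ onto $\ell^2$ (take $x_Z^0=0$ and vary $y_0$ through the Riesz basis $\{\psi_{Yj}\}$), which is needed for the ``only if'' direction and which the paper leaves implicit.
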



\begin{proof}
Since the family of eigenvectors $\left\{ \varphi _{i},i\in \mathbb{N}%
\right\} ~$of the operator $A~$is a Riesz basic of $X$ and the family of
eigenvectors $\left\{ \psi _{i},i\in \mathbb{N}\right\} ~$of the operator $%
A^{\ast }~$is a Riesz basic of $X~$as well , for any $x^{0}\in X$ we have

\begin{equation}
x^{0}=\sum\limits_{i\in \mathbb{N}}\left( x^{0},\psi _{i}\right) \varphi
_{i}=\sum\limits_{j\in I_{Y}}\left( x^{0},\psi _{j}\right) \left( 
\begin{array}{c}
\varphi _{Yj} \\ 
0%
\end{array}%
\right) +\sum\limits_{k\in \mathbb{I}_{Z}}\left( x^{0},\psi _{k}\right)
\left( 
\begin{array}{c}
\varphi _{Yk} \\ 
\varphi _{Zk}%
\end{array}%
\right) ,  \tag{x0}  \label{x0}
\end{equation}

\begin{equation}
x_{Y}^{0}=\sum\limits_{j\in \mathbb{I}_{Y}}\left( x^{0},\psi _{j}\right)
\varphi _{Yj}+\sum\limits_{k\in \mathbb{I}_{Z}}\left( x^{0},\psi _{k}\right)
\varphi _{Yk},  \tag{xY0}  \label{xY0}
\end{equation}

\bigskip 
\begin{equation}
\left( x_{Y}^{0},\psi _{Yj}\right) =\left( x^{0},\psi _{j}\right) +\left(
\sum\limits_{k\in \mathbb{I}_{Z}}\left( x^{0},\psi _{k}\right) \varphi
_{Yk},\psi _{Yj}\right) =  \tag{xY0psiY}  \label{xY0psiY}
\end{equation}
\begin{equation}
=\left( x_{Y}^{0},\psi _{Yj}\right) +\left( x_{Z}^{0},\psi _{Zj}\right)
+\left( \sum\limits_{k\in \mathbb{I}_{Z}}\left( x^{0},\psi _{k}\right)
\varphi _{Yk},\psi _{Yj}\right) \footnote{$\mathrm{hence~}\left(
x_{Z}^{0},\psi _{Zj}\right) +\left( \sum\limits_{k\in \mathbb{I}_{Z}}\left(
x^{0},\psi _{k}\right) \varphi _{Yk},\psi _{Yj}\right) =0.\mathrm{V}$}. 
\notag
\end{equation}%
After substituting $S\left( t_{1}\right) x^{0}$ instead of $x^{0}$ in (\ref%
{x0}), we obtain

\begin{equation*}
x^{0}\left( t_{1}\right) :=S\left( t_{1}\right) x^{0}=\sum\limits_{i\in 
\mathbb{N}}\left( S\left( t_{1}\right) x^{0},\psi _{i}\right) \varphi _{i}=
\end{equation*}

\begin{equation}
=\sum\limits_{j\in I_{Y}}\left( x^{0},\psi _{j}\right) e^{\lambda
_{j}t_{1}}\left( 
\begin{array}{c}
\varphi _{Yj} \\ 
0%
\end{array}%
\right) +\sum\limits_{k\in I_{Z}}\left( x^{0},\psi _{k}\right) e^{\lambda
_{k}t_{1}}\left( 
\begin{array}{c}
\varphi _{Yk} \\ 
\varphi _{Zk}%
\end{array}%
\right) ,  \tag{S(t1)x0}  \label{S(t1)x0}
\end{equation}

Using $b$ instead of $x^{0}$ and $t_{1}-\tau $ instead of $t_{1}$ in (\ref%
{S(t1)x0}) we obtain after integration, that

%
\begin{equation*}
\int_{0}^{t_{1}}S\left( t_{1}-\tau \right) bu\left( \tau \right) d\tau =
\end{equation*}%
\begin{equation}
=\int_{0}^{t_{1}}\left[ \sum\limits_{j\in \mathbb{I}_{Y}}b_{j}e^{\lambda
_{j}\ (t_{1}-\tau )\ }\left( 
\begin{array}{c}
\varphi _{Yj} \\ 
0%
\end{array}%
\right) +\sum\limits_{k\in I_{Z}}b_{k}e^{\lambda _{k}\left( t_{1}-\tau
\right) }\left( 
\begin{array}{c}
\varphi _{Yk} \\ 
\varphi _{Zk}%
\end{array}%
\right) \right] u\left( \tau \right) d\tau .  \tag{S(t1)bu(t)}
\label{S(t1)bu(t)}
\end{equation}%
Substituting \ (\ref{S(t1)x0}) and (\ref{S(t1)bu(t)}) into (\ref{eqsol}), we
obtain

\begin{equation}
x\left( t_{1}\right) =\sum\limits_{j\in I_{Y}}\left( \left( x^{0},\psi
_{j}\right) e^{\lambda _{j}t_{1}}+\left( \int_{0}^{t_{1}}b_{j}e^{\lambda
_{j}\left( \ t_{1}-\tau \right) \ }u\left( \tau \right) d\tau \right)
\right) \left( 
\begin{array}{c}
\varphi _{Yj} \\ 
0%
\end{array}%
\right) +  \tag{x(t1)}  \label{x(t1)}
\end{equation}%
\begin{equation*}
+\sum\limits_{k\in I_{Z}}\left( \left( x^{0},\psi _{k}\right) e^{\lambda
_{k}t_{1}}+\left( \int_{0}^{t_{1}}\left( b,\psi _{k}\right) e^{\lambda
_{k}\left( t_{1}-\tau \right) }u\left( \tau \right) d\tau \right) \right)
\left( 
\begin{array}{c}
\varphi _{Yk} \\ 
\varphi _{Zk}%
\end{array}%
\right) ,
\end{equation*}

%
\begin{equation}
x_{Y}\left( t_{1}\right) =\sum\limits_{j\in I_{Y}}\left( x^{0},\psi
_{j}\right) e^{\lambda _{j}t_{1}}\varphi _{Yj}+\sum\limits_{k\in
I_{Z}}\left( x^{0},\psi _{k}\right) e^{\lambda _{k}t_{1}}\varphi _{Yk}+ 
\tag{xY(t1)}  \label{xY(t1)}
\end{equation}%
\begin{equation*}
+\int_{0}^{t_{1}}\sum\limits_{j\in I_{Y}}\left( b_{j}e^{\lambda _{j}\left( \
t_{1}-\tau \right) \ }\varphi _{Yj}+\sum\limits_{k\in I_{Z}}b_{k}e^{\lambda
_{k}\left( t_{1}-\tau \right) }\varphi _{Yk}\right) u\left( \tau \right)
d\tau .
\end{equation*}

Taking into account, that 
\begin{equation*}
\left( x^{0},\psi _{k}\right) =\left( x_{Z}^{0},\psi _{Zk}\right) ~~\mathrm{%
and~}\left( ~\varphi _{Yk},\psi _{Yj}\right) +\left( \varphi _{Zk},\psi
_{Zj}\right) ~=0,\forall j\in I_{Y},k\in I_{Z},
\end{equation*}%
we obtain%
\begin{equation*}
\left( x_{Y}\left( t_{1}\right) ,\psi _{Yj}\right) =
\end{equation*}%
\begin{equation*}
=e^{\lambda _{j}t_{1}}\left( x^{0},\psi _{j}\right) +\left(
\sum\limits_{k\in I_{\mathbb{Z}}}e^{\lambda _{k}t_{1}}\left( x^{0},\psi
_{k}\right) \varphi _{Yk},\psi _{Yj}\right) +
\end{equation*}%
\begin{equation*}
+\left( \int_{0}^{t_{1}}e^{\lambda _{j}\left( t_{1}-\tau \right) \
}b_{j}+\sum\limits_{k\in I_{\mathbb{Z}}}\left( e^{\lambda _{k}\left(
t_{1}-\tau \right) }\left( b_{Z},\psi _{Zk}\right) \varphi _{Yk},\psi
_{Yj}\right) \right) u\left( \tau \right) d\tau =.
\end{equation*}%
\begin{equation*}
=e^{\lambda _{j}t_{1}}\left( x^{0},\psi _{j}\right) -\sum\limits_{k\in I_{%
\mathbb{Z}}}\left( e^{\lambda _{k}t_{1}}\left( x_{Z}^{0},\psi _{Zk}\right)
\varphi _{Zk},\psi _{Zj}\right) +
\end{equation*}%
\begin{equation*}
+\int_{0}^{t_{1}}\left( e^{\lambda _{j}\left( t_{1}-\tau \right) \
}b_{j}-e^{-\lambda _{j}t_{1}}\sum\limits_{k\in I_{\mathbb{Z}}}\left(
e^{\lambda _{k}\left( t_{1}-\tau \right) }\left( b_{Z},\psi _{Zk}\right)
\varphi _{Yk},\psi _{Yj}\right) \right) u\left( \tau \right) d\tau ,~j\in
I_{Y}~.
\end{equation*}%
Since the sequence $\left\{ \varphi _{Zk},k\in I_{Z}\right\} $ is a Riesz
basic of the space $Z$ (see Lemma \ref{L1}), we have

\begin{equation}
S_{Z}\left( t\right) z=\sum\limits_{k\in I_{\mathbb{Z}}}e^{\lambda
_{k}t}\left( z,\psi _{Zk}\right) \varphi _{Zk},\forall t>0,\forall z\in Z~. 
\tag{Sz(t)Xz}  \label{Sz(t)Xz}
\end{equation}%
Hence 
\begin{equation}
\left( x_{Y}\left( t_{1}\right) ,\psi _{Yj}\right) =e^{\lambda _{j}t_{1}} 
\left[ \left( x^{0},\psi _{j}\right) -e^{-\lambda _{j}t_{1}}\left(
S_{Z}\left( t_{1}\right) x_{Z}^{0},\psi _{Zj}\right) \right] +  \tag{xYpsiYj}
\label{xYpsiYj}
\end{equation}%
\begin{equation*}
+\left( \int_{0}^{t_{1}}e^{\lambda _{j}\ t_{1}\ }\left[ e^{-\lambda _{j}\tau
\ }b_{j}-e^{-\lambda _{j}t_{1}}\left( S_{Z}\left( t_{1}-\tau \right)
b_{Z},\psi _{Zj}\right) \right] \right) u\left( \tau \right) d\tau ,~j\in
I_{Y}~
\end{equation*}%
$.$

%
%

Thus, the partial null-controllability we are proving is equivalent \ to the
existence of a solution $u\left( \cdot \right) \in L_{2}\left[ 0,t_{1}\right]
$ to the moment problem%
\begin{equation*}
-\left( x^{0},\psi _{j}\right) +e^{-\lambda _{j}t_{1}}\left( S_{Z}\left(
t_{1}\right) x_{Z,}^{0},\psi _{Zj}\right) =
\end{equation*}%
\begin{equation}
=\int_{0}^{t_{1}}\left[ e^{-\lambda _{j}\tau \ }b_{j}-e^{-\lambda
_{j}t_{1}}\left( S_{Z}\left( t_{1}-\tau \right) b_{Z},\psi _{Zj}\right) %
\right] u\left( t\right) dt,  \tag{ LMP }  \label{LMP}
\end{equation}%
$\forall j\in I_{Y},~~\forall x^{0}\in X.$

By convergence of series (\ref{x_z(t1)}) we obtain~ 
\begin{equation*}
\sum\limits_{j\in I_{Y}}\left\vert \left( x^{0},\psi _{j}\right)
-e^{-\lambda _{j}t_{1}}\left( S_{Z}\left( t_{1}\right) x_{Z,}^{0},\psi
_{Zj}\right) \right\vert ^{2}<+\infty ,\forall x^{0}\in X.
\end{equation*}%
Hence for any $\forall x^{0}\in X$ moment problem (\ref{LMP}) admits a
solution $u\left( \cdot \right) \in L_{2}\left[ 0,t_{1}\right] $ if and only
if the family (\ref{-exp}) is strongly minimal.

This proves the theorem.
\end{proof}

\begin{remark}
Of course series (\ref{x_z(t1)}) converges, if $I_Y$ is finite. If $I_Y$ is
infinite, then the convergence of series (\ref{x_z(t1)}) essentially depends
on the structure of operator $A_{YZ}$.
\end{remark}

\textbf{Example. }Let $X=L_{2}\left[ 0,\pi \right] \times \mathbb{R},~Y=L_{2}%
\left[ 0,\pi \right] ,~Z=\mathbb{R},A_{Y}=\Delta ~$with boundary conditions $%
y\left( 0\right) =y\left( \pi \right) =0,\ A_{Z}=w\in \mathbb{R},~0\leq
w<1,~A_{YZ}=a\in Y;$

$I_{Y}=\mathbb{N}\backslash \{1\}=2,3,...,~I_{Z}=\{1\};~\lambda
_{1}=w,\varphi _{Z1}=\psi _{Z1}=1,$

$\lambda _{l}=-\left( l-1\right) ^{2},~\varphi _{Yl}=\psi _{Yl}=\sqrt{\frac{2%
}{\pi }}\sin \left( l-1\right) x,~l\in \mathbb{N}\backslash \{1\}~;$ $~$

$A_{YZ}^{\ast }\psi _{Yl}+w\psi _{Zl}=\lambda _{l}\psi _{Zl}\iff $

$\iff \psi _{Zl}=\frac{A_{YZ}^{\ast }\psi _{Yl}}{\lambda _{l}-w}=-\frac{%
\left( a,\psi _{Yl}\right) }{\left( l-1\right) ^{2}+w}=-\frac{\sqrt{\frac{2}{%
\pi }}\int_{0}^{\pi }a\left( x\right) \sin \left( \left( l-1\right) x\right)
dx}{\left( l-1\right) ^{2}+w},l\in \mathbb{N}\backslash \{1\},$

Obviously,$~S_{Z}\left( t_{1}\right) x_{Z,}^{0}=e^{wt_{1}}x_{Z}^{0}.$ Using $%
j=l-1.l=2,3...,$we obtain$~$

$\psi _{Zj+1}=-\sqrt{\frac{2}{\pi }}\frac{\int_{0}^{\pi }a\left( x\right)
\sin jxdx}{j^{2}+w},j\in \mathbb{N}.$

Hence, for any $l\in I_{Y}~~$we have

$(e^{-\lambda _{l}t_{1}}\left( S_{Z}\left( t_{1}\right) x_{Z,}^{0},\psi
_{Zl}\right) =e^{\left( l-1\right) ^{2}t_{1}}e^{wt_{1}}x_{0}^{2}\psi _{Zl}=$

$=-\sqrt{\frac{2}{\pi }}x_{0}^{2}e^{wt_{1}}e^{j^{2}t_{1}}\frac{\int_{0}^{\pi
}a\left( x\right) \sin jxdx}{j^{2}+w},j\in \mathbb{N}.$

Obvious, $\sum\limits_{j=1}^{\infty }\frac{1}{j^{2}}<+\infty ,$ hence by
Riesz-Fisher Theorem there exists $a\left( x\right) \in L_{2}\left[ 0,\pi %
\right] ,$ such that $\int_{0}^{\pi }a\left( x\right) \sin jxdx=\frac{1}{j}%
,~~~~j\in \mathbb{N}.$

Also,~ $\sum\limits_{j=1}^{~\infty }e^{-2j^{2}t_{1}}<+\infty ,$ hence by
Riesz-Fisher Theorem there exists $a\left( x\right) \in L_{2}\left[ 0,\pi %
\right] ,$ such that $\int_{0}^{\pi }a\left( x\right) \sin
jxdx=e^{-j^{2}t_{1}},j\in \mathbb{N}.$

If $\int_{0}^{\pi }a\left( x\right) \sin jxdx=\frac{1}{j},$ then 
\begin{equation*}
\sum\limits_{j\in I_{Y}}\left\vert e^{-\lambda _{j}t_{1}}\left( S_{Z}\left(
t_{1}\right) x_{Z,}^{0},\psi _{Zj}\right) \right\vert ^{2}=\sqrt{\frac{2}{%
\pi }}e^{2wt_{1}}\left\vert x_{Z}^{0}\right\vert
^{2}\sum\limits_{j=1}^{\infty }e^{2j^{2}t_{1}}\frac{1}{j^{2}\left(
j^{2}+w\right) ^{2}}=+\infty .
\end{equation*}

If $\int_{0}^{\pi }a\left( x\right) \sin jxdx=e^{-2j^{2}t_{1}},$then

\begin{eqnarray*}
\sum\limits_{j\in I_{Y}}\left\vert e^{-\lambda _{j}t_{1}}\left( S_{Z}\left(
t_{1}\right) x_{Z,}^{0},\psi _{Zj}\right) \right\vert ^{2} &=&\sqrt{\frac{2}{%
\pi }}\sum\limits_{j=1}^{\infty }e^{2j^{2}t_{1}}\sqrt{\frac{2}{\pi }}%
e^{2wt_{1}}\left\vert x_{Z,}^{0}\right\vert ^{2}\frac{e^{-2j^{2}t_{1}}}{%
\left( j^{2}+w\right) ^{2}}= \\
&=&\sqrt{\frac{2}{\pi }}e^{2wt_{1}}\left\vert x_{Z,}^{0}\right\vert
^{2}\sum\limits_{j=1}^{\infty }\frac{1}{\left( j^{2}+w\right) ^{2}}<+\infty .
\end{eqnarray*}

\subsubsection{The case $b\in Y$}

In this chase problem moment (\ref{LMP}) is simplified.

Let $b_{z}=0,$ i.e. $b=b_{Y}\in Y.$

Controllability of such partial parabolic control systems have important
applications to many fields such as chemistry, physics or biology (see also
Example 3 below).

If $b\in Y,$ then (\ref{CS1})--(\ref{P}) is not completely null-controllable
on $\left[ 0,t_{1}\right] ,$ $\forall t_{1}>0,$ because solutions of
equation (\ref{CS2}) are independent on control. However it may be partially
null controllable.


In this chase problem moment (\ref{LMP}) is simplified:

\begin{equation*}
-\left( x^{0},\psi _{j}\right) +e^{-\lambda _{j}t_{1}}\left( S_{Z}\left(
t_{1}\right) x_{Z,}^{0},\psi _{Zj}\right) =
\end{equation*}%
\begin{equation}
=\int_{0}^{t_{1}}b_{j}e^{-\lambda _{j}\tau \ }u\left( t\right) dt,j\in I_{Y}.
\tag{ LMP0}  \label{LMP0}
\end{equation}

\begin{corollary}
\label{C1}If the \ series (\ref{x_z(t1)}) converges for any $x_{Z}^{0}\in Z,$
then system (\ref{CS1})--(\ref{P}) is partially null controllable on $%
[0,t_{1}]$~ if and only if the family of exponent%
\begin{equation}
\left\{ e^{-\lambda _{j}t}b_{j},j\in I_{Y},t\in \left[ 0,t_{1}\right]
\right\}  \tag{-exp0}  \label{-exp0}
\end{equation}%
is strongly minimal.
\end{corollary}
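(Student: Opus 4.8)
The plan is to obtain Corollary \ref{C1} as an immediate specialization of Theorem \ref{T1} to the case $b_Z=0$. First I would record that the hypothesis $b\in Y$ means exactly that $b=b_Y$ with $b_Z=0$. Since $S_Z(t)$ is linear, it follows that $S_Z(t_1-t)b_Z=0$ for every $t\in[0,t_1]$, and hence $\left(S_Z(t_1-t)b_Z,\psi_{Zj}\right)=0$ for all $j\in I_Y$ and all $t\in[0,t_1]$.

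The second step is to feed this vanishing into the definition of $b_j(t)$. By definition,
\begin{equation*}
b_j(t)=b_j-e^{\lambda_j t}e^{-\lambda_j t_1}\left(S_Z(t_1-t)b_Z,\psi_{Zj}\right),
\end{equation*}
so the coupling term drops out and $b_j(t)=b_j$ for every $j\in I_Y$ and every $t\in[0,t_1]$. Consequently the exponential family (\ref{-exp}) collapses to $\left\{e^{-\lambda_j t}b_j(t)\right\}=\left\{e^{-\lambda_j t}b_j\right\}$, which is precisely the family (\ref{-exp0}). Equivalently, one checks directly that the moment problem (\ref{LMP}) reduces to the simplified problem (\ref{LMP0}), since the integral term $e^{-\lambda_j t_1}\left(S_Z(t_1-\tau)b_Z,\psi_{Zj}\right)$ vanishes identically.

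The final step is to invoke Theorem \ref{T1} under the standing convergence hypothesis on series (\ref{x_z(t1)}), which is carried over verbatim into the statement of the corollary. Since partial null controllability of (\ref{CS1})--(\ref{P}) on $[0,t_1]$ is, by that theorem, equivalent to the strong minimality of (\ref{-exp}), and since (\ref{-exp}) coincides with (\ref{-exp0}) in the present case, the equivalence asserted in the corollary follows at once. I do not anticipate any genuine obstacle: the corollary is a pure specialization of the theorem, and the only point requiring care is the bookkeeping confirming that $b_Z=0$ annihilates exactly the coupling term in $b_j(t)$, leaving the pure exponential family.
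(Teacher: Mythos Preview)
Your proposal is correct and follows precisely the paper's approach: the corollary is an immediate specialization of Theorem~\ref{T1} to the case $b_Z=0$, obtained by noting that the coupling term $e^{-\lambda_j t_1}\left(S_Z(t_1-\tau)b_Z,\psi_{Zj}\right)$ vanishes identically, so that (\ref{LMP}) reduces to (\ref{LMP0}) and the family (\ref{-exp}) coincides with (\ref{-exp0}). The paper does not spell out a separate proof for Corollary~\ref{C1}, treating it as a direct consequence of this simplification, which is exactly what you do.
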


\begin{corollary}
\label{C2}If the series (\ref{x_z(t1)} converges for any $x_{Z}^{0}\in Z,$
then system (\ref{CS1})--(\ref{P}) is partially null controllable on $%
[0,t_{1}]$~ if and only if equation%
\begin{equation*}
\dot{y}\left( t\right) =A_{Y}y\left( t\right) +b_{Y}u\left( t\right)
\end{equation*}%
is null (completely) controllable on $[0,t_{1}].$
\end{corollary}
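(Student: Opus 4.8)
The plan is to observe that, in the present case $b_{Z}=0$, both controllability properties reduce to one and the same moment-theoretic condition --- the strong minimality of the single family $\left\{ e^{-\lambda _{j}t}b_{j},\,j\in I_{Y}\right\} $ in $L_{2}[0,t_{1}]$ --- and then to conclude by transitivity. The key elementary remark is that when $b=b_{Y}\in Y$ one has $b_{j}=\left( b,\psi _{j}\right) =\left( b_{Y},\psi _{Yj}\right) $, so the coefficients governing the reduced equation $\dot{y}=A_{Y}y+b_{Y}u$ are \emph{literally} the coefficients $b_{j}$ occurring in the simplified moment problem (\ref{LMP0}).

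First I would invoke Corollary \ref{C1}: under the standing hypothesis that series (\ref{x_z(t1)}) converges for every $x_{Z}^{0}\in Z$, the coupled system (\ref{CS1})--(\ref{P}) is partially null controllable on $[0,t_{1}]$ if and only if the family (\ref{-exp0}) is strongly minimal. This settles one half of the desired equivalence and leaves only the task of showing that null (complete) controllability of $\dot{y}=A_{Y}y+b_{Y}u$ on $[0,t_{1}]$ is governed by the very same family.

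For this I would run the moment method directly on the reduced equation. By Lemma \ref{L1} the systems $\left\{ \varphi _{Yj},\,j\in I_{Y}\right\} $ and $\left\{ \psi _{Yj},\,j\in I_{Y}\right\} $ are Riesz bases of $Y$, and $S_{Y}(t)^{\ast }\psi _{Yj}=e^{\bar{\lambda}_{j}t}\psi _{Yj}$. Expanding the mild solution $y(t_{1})=S_{Y}(t_{1})y_{0}+\int_{0}^{t_{1}}S_{Y}(t_{1}-\tau )b_{Y}u(\tau )\,d\tau $, projecting onto $\psi _{Yj}$, imposing $y(t_{1})=0$ and dividing by $e^{\lambda _{j}t_{1}}$, one arrives at the moment problem
\begin{equation*}
\left( y_{0},\psi _{Yj}\right) =-\int_{0}^{t_{1}}e^{-\lambda _{j}\tau }b_{j}\,u\left( \tau \right) \,d\tau ,\qquad j\in I_{Y}.
\end{equation*}
Because $\left\{ \psi _{Yj}\right\} $ is a Riesz basis, the coefficient map $y_{0}\mapsto \left\{ \left( y_{0},\psi _{Yj}\right) \right\} $ carries $Y$ \emph{onto} all of $\ell ^{2}$; hence null controllability for every $y_{0}\in Y$ is equivalent to solvability of this moment problem for every $\ell ^{2}$ right-hand side, i.e. to the Riesz--Fisher (equivalently, strongly minimal) property of $\left\{ e^{-\lambda _{j}\tau }b_{j}\right\} $. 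Combined with the previous paragraph, this proves the two controllability notions equivalent.

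The only point requiring care --- and what I would regard as the heart of the matter --- is the verification that the admissible right-hand sides fill out exactly $\ell ^{2}$ in \emph{both} moment problems, so that strong minimality of the common family is literally the same condition in each. For the reduced equation this is immediate from the Riesz-basis property of $\left\{ \psi _{Yj}\right\} $ (Lemma \ref{L1}); for the coupled system it is precisely the function of the convergence of series (\ref{x_z(t1)}) to keep the data $\left( x^{0},\psi _{j}\right) -e^{-\lambda _{j}t_{1}}\left( S_{Z}(t_{1})x_{Z}^{0},\psi _{Zj}\right) $ square-summable, which is already built into Corollary \ref{C1}. No further estimates are needed.
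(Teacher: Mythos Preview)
Your argument is correct and follows essentially the same line as the paper's. The paper's proof is terser: rather than factoring through Corollary~\ref{C1} and the strong-minimality criterion, it observes directly that the left-hand side of (\ref{LMP0}) can be rewritten as $-(y^{0},\psi_{Yj})$ for a suitable $y^{0}\in Y$ (using convergence of (\ref{x_z(t1)}) together with the Riesz-basis property of $\{\psi_{Yj}\}$), so that (\ref{LMP0}) becomes literally the moment problem for null controllability of $\dot y=A_{Y}y+b_{Y}u$. Your version makes the common intermediary --- solvability for all $\ell^{2}$ right-hand sides, i.e.\ strong minimality of $\{e^{-\lambda_{j}t}b_{j}\}$ --- explicit, which is perhaps clearer but amounts to the same thing.
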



\begin{proof}
Since the \ series (\ref{x_z(t1)}) converges for any $x_{Z}^{0}\in Z,$ and
the family $\psi _{j},~j\in Y$ forms a Riesz basic of $Y $, then for $%
x_{Y}^{0}\in Y~$and$~x_{Z}^{0}\in Z$ there exists $y^{0}\in Y, $ such that

\begin{equation*}
-\left( x^{0},\psi _{j}\right) +e^{-\lambda _{j}t_{1}}\left( S_{Z}\left(
t_{1}\right) x_{Z,}^{0},\psi _{Zj}\right) =-\left( y^{0},\psi _{j}\right)
,j\in Y.
\end{equation*}%
The corollary follows from the above formula and (\ref{LMP0}).
\end{proof}


\subsubsection{One sufficient condition for strong minimality}

The direct method for checking the strong minimality of any sequence (\ref{s}%
) is the computing of $\lim\limits_{n\rightarrow \infty }\lambda _{1}^{n}>0$
by numeric methods. It may be rather difficult. However if $b_{Y}\in Y,$
then one can find more simpler strong minimality conditions for sequence (%
\ref{-exp}) in some partial cases (see, for example, results of \cite%
{AmmarKhodja},\cite{Young80}, \cite{Young98},\cite{FattoriniRussel} and
references therein) .

Consider a strongly decreasing sequence of negative numbers $\{\lambda
_{j},j\in I_Y \}$, with finite upper density \cite{Mandelbrojt}.

Let the family 
\begin{equation}
\left\{ \exp (\lambda _{j}t),j\in I_{Y},t\in \left[ 0,t\right] \right\} 
\tag{exp(lambda\_t)}  \label{exp(lambda_t)}
\end{equation}%
be minimal, and let

\begin{equation}
\left\{ q_{j}\left( t\right) ,j\in I_{Y},t\in \left[ 0,t_{1}\right] \right\}
\tag{exp\_biort}  \label{exp_biort}
\end{equation}%
be biorthogonal family for (\ref{exp(lambda_t)}). 

\begin{theorem}
\label{T2} If $b_j \neq 0,\forall j\in I_Y, \lim_{j\rightarrow \infty }\frac{%
2\ln \left\vert b_{j}\right\vert }{\lambda _{j}}$ is finite, $%
t_{1}>\lim_{j\rightarrow \infty }\frac{2\ln \left\vert b_{j}\right\vert }{%
\lambda _{j}}$, and for any $\varepsilon >0$ there exists a positive
constant $K_{\varepsilon }$ such that 
\begin{equation}
\left\vert \left\vert q_{j}\left( \cdot \right) \right\vert \right\vert
<K_{\varepsilon }e^{-\lambda _{j}\varepsilon },j\in I_{Y},  \tag{bio\_est}
\label{bio_est}
\end{equation}
then the family 
\begin{equation}
\left\{ b_{j}\exp (-\lambda _{j}t),j\in I_{Y},t\in \left[ 0,t_{1}\right]
\right\}  \tag{-expb}  \label{-expb}
\end{equation}
is strongly minimal.
\end{theorem}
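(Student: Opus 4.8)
The plan is to invoke the standard characterization of strong minimality as the Riesz--Fisher property (cf. \cite{Young80}): the family (\ref{-expb}) is strongly minimal in $L_2[0,t_1]$ if and only if it possesses a positive lower Riesz bound, i.e. there is $A>0$ with $\Vert\sum_j a_j b_j e^{-\lambda_j t}\Vert^2\ge A\sum_j|a_j|^2$ for every finite scalar family $\{a_j\}$. A convenient way to produce such a bound is to exhibit a biorthogonal system $\{r_k\}$ for (\ref{-expb}) that is a Bessel sequence. Indeed, if $\langle b_j e^{-\lambda_j t},r_k\rangle=\delta_{jk}$, then $a_k=\langle\sum_j a_j b_j e^{-\lambda_j t},r_k\rangle$, so a Bessel bound $B$ for $\{r_k\}$ gives $\sum_k|a_k|^2\le B\Vert\sum_j a_j b_j e^{-\lambda_j t}\Vert^2$, which is precisely the lower bound with $A=1/B$. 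Since $\sum_k|\langle h,r_k\rangle|^2\le\Vert h\Vert^2\sum_k\Vert r_k\Vert^2$ by Cauchy--Schwarz, it suffices to build a biorthogonal $\{r_k\}$ with $\sum_k\Vert r_k\Vert^2<\infty$.

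To construct $\{r_k\}$ I would transport the given biorthogonal family (\ref{exp_biort}) of (\ref{exp(lambda_t)}) through the reflection $R\colon f(t)\mapsto f(t_1-t)$, which is unitary on $L_2[0,t_1]$. As $Re^{\lambda_j t}=e^{\lambda_j t_1}e^{-\lambda_j t}$ and $R$ preserves biorthogonality, the functions $\tilde q_j:=Rq_j$ satisfy $e^{\lambda_j t_1}\langle e^{-\lambda_j t},\tilde q_k\rangle=\delta_{jk}$. Setting
\[
r_k:=\frac{e^{\lambda_k t_1}}{\overline{b_k}}\,\tilde q_k ,
\]
a direct computation (here $b_k\neq0$ is used) gives $\langle b_j e^{-\lambda_j t},r_k\rangle=\delta_{jk}$, so $\{r_k\}$ is biorthogonal to (\ref{-expb}). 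Because $R$ is isometric, $\Vert r_k\Vert=e^{\lambda_k t_1}|b_k|^{-1}\Vert q_k\Vert$, and the hypothesis (\ref{bio_est}) yields, for every $\varepsilon>0$,
\[
\Vert r_k\Vert^2<\frac{K_\varepsilon^2}{|b_k|^2}\,e^{2\lambda_k(t_1-\varepsilon)} .
\]

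It remains to prove $\sum_k\Vert r_k\Vert^2<\infty$. Writing $|b_k|^{-2}=e^{-2\ln|b_k|}$, the exponent of the $k$-th term equals $2\lambda_k\bigl[(t_1-\varepsilon)-\tfrac12\cdot\tfrac{2\ln|b_k|}{\lambda_k}\bigr]$. Put $\alpha:=\lim_{j\to\infty}\tfrac{2\ln|b_j|}{\lambda_j}$, which is finite by hypothesis. Since $t_1>\alpha$ and $t_1>0$, for $\varepsilon$ small enough the bracket tends to the strictly positive limit $(t_1-\varepsilon)-\tfrac{\alpha}{2}$, while $2\lambda_k\to-\infty$; hence $\Vert r_k\Vert^2\le Ce^{-c|\lambda_k|}$ for some $c>0$ and all large $k$. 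Finally, the finite upper density of $\{\lambda_j\}$ means the counting function $n(r)=\#\{k:|\lambda_k|\le r\}$ satisfies $n(r)\le Dr$ for large $r$; as $\{|\lambda_k|\}$ is increasing, $k\le n(|\lambda_k|)\le D|\lambda_k|$, so $|\lambda_k|\ge k/D$ and $\sum_k e^{-c|\lambda_k|}\le\sum_k e^{-(c/D)k}<\infty$. Thus $\sum_k\Vert r_k\Vert^2<\infty$, and the theorem follows.

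The reflection algebra and the biorthogonality check are routine; the crux is the convergence estimate of the last paragraph, where the two quantitative assumptions must be balanced. Estimate (\ref{bio_est}) controls $\Vert q_k\Vert$ only up to the growing factor $e^{-\lambda_k\varepsilon}$, and this loss has to be absorbed simultaneously with the growth $e^{\lambda_k t_1}$ and the magnitude of $b_k$; this is exactly the role of the threshold $t_1>\lim_{j\to\infty}2\ln|b_j|/\lambda_j$, after which the finite upper density supplies a geometrically decaying majorant. If $I_Y$ is finite the family (\ref{-expb}) is trivially strongly minimal, so only the infinite case requires the argument above.
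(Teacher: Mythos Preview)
Your proof is correct and follows the same approach as the paper: build the biorthogonal family $r_k = b_k^{-1}e^{\lambda_k t_1}\,q_k(t_1-\cdot)$ via time-reflection, show $\sum_k \|r_k\|^2 < \infty$ so that it is a Bessel sequence, and conclude strong minimality via the Riesz--Fisher equivalence. The only cosmetic difference is that the paper phrases the convergence of $\sum_j |b_j|^{-2} e^{2\lambda_j(t_1-\varepsilon)}$ through the abscissa of a Dirichlet series (citing \cite{Mandelbrojt}), while you estimate the exponent directly and invoke the finite upper density to sum the tail.
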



\begin{proof}
Obviously, the family 
\begin{equation*}
\left\{ \frac{q_{j}\left( t_{1}-t\right) }{b_{j}}e^{\lambda _{j}t_{1}},j\in
I_{Y},t\in \left[ 0,t_{1}\right] \right\}
\end{equation*}%
is biorthogonal to the family (\ref{-expb}). 

Let $I_{Y}^{f}$ be any finite subsequence of $I_{Y}.$ We have

\begin{equation*}
\left\Vert \sum_{j\in I_{Y}^{f}}\beta _{j}\frac{q_{j}\left( t_{1}-t\right)
e^{\lambda _{j}t_{1}}}{b_{j}}\right\Vert ^{2}\leq \left( \sum_{j\in
I_{Y}^{f}}\left\vert \beta _{j}\right\vert \frac{1}{\left\vert
b_{j}\right\vert }\left\Vert q_{j}\left( \cdot \right) \right\Vert
e^{\lambda _{j}t_{1}}\right) ^{2}\leq
\end{equation*}%
\begin{equation*}
\leq \left( \sum_{j\in I_{Y}^{f}}\left\vert \beta _{j}\right\vert
^{2}\right) \left( \sum_{j\in I_{Y}^{f}}\frac{1}{\left\vert b_{j}\right\vert
^{2}}\left\Vert q_{j}\left( \cdot \right) \right\Vert ^{2}e^{2\lambda
_{j}t_{1}}\right) \leq
\end{equation*}%
\begin{equation*}
\leq \left( \sum_{j\in I_{Y}^{f}}\left\vert \beta _{j}\right\vert
^{2}\right) \left( \sum_{j\in I_{Y}^{f}}\frac{1}{\left\vert b_{j}\right\vert
^{2}}K_{\varepsilon }^{2}e^{-2\lambda _{j}\varepsilon }e^{2\lambda
_{j}t_{1}}\right) =
\end{equation*}%
\begin{equation*}
=K_{\varepsilon }^{2}\left( \sum_{j\in I_{Y}^{f}}\left\vert \beta
_{j}\right\vert ^{2}\right) \left( \sum_{j\in I_{Y}^{f}}\frac{1}{\left\vert
b_{j}\right\vert ^{2}}e^{2\lambda _{j}\left( t_{1}-\varepsilon \right)
}\right) <K_{\varepsilon }^{2}\left( \sum_{j\in I_{Y}^{f}}\left\vert \beta
_{j}\right\vert ^{2}\right) \left( \sum_{j\in I_{Y}^{f}}\frac{1}{\left\vert
b_{j}\right\vert ^{2}}e^{\lambda _{j}t_{1}}\right)
\end{equation*}%
for $\varepsilon <\frac{t_{1}}{2}.$

The series $\sum_{j\in I_{Y}}\frac{1}{\left\vert b_{j}\right\vert ^{2}}%
e^{\lambda _{j}t_{1}}$ is the Dirichlet series with abscissa of \ the
convergence$~$\cite{Mandelbrojt}%
\begin{equation*}
\sigma _{c}=\lim_{j\rightarrow \infty }\frac{2\ln \left\vert
b_{j}\right\vert }{\lambda _{j}}.
\end{equation*}%
Hence

\begin{equation}
\left\Vert \sum_{j\in I_{Y}^{f}}\beta _{j}\frac{q_{j}\left( t_{1}-t\right) }{%
b_{j}}e^{\lambda _{j}t_{1}}\right\Vert ^{2}\leq M^{2}\sum_{j\in
I_{Y}^{f}}\left\vert \beta _{j}\right\vert ^{2},~\forall \beta _{j}\in 
\mathbb{R},j\in I_{Y}^{f}  \tag{BS}  \label{BS}
\end{equation}%
for$~$any$~t_{1}>\lim_{j\rightarrow \infty }\frac{2\ln \left\vert
b_{j}\right\vert }{\lambda _{j}},$where $M^{2}=K_{\varepsilon
}^{2}\sum_{j\in I_{Y}}\frac{1}{b_{j}^{2}}e^{\lambda _{j}t_{1}}.$ It follows
from (\ref{BS}), that the family $\left\{ \frac{q_{j}\left( t_{1}-t\right) }{%
b_{j}}e^{\lambda _{j}t_{1}},j\in I_{Y},t\in \left[ 0,t_{1}\right] \right\} $
is a Bessel sequence. Hence \cite{CasazzaChristensenLindner},~\cite{Young80}%
,~family (\ref{-expb}) is a Riesz-Fisher sequence, i.e. this family is
strongly minimal.

This proves the theorem.
\end{proof}

The next theorem follows from previous Theorem \ref{T2} and results of \cite%
{Mandelbrojt}, \cite{Leontiev}.

\begin{theorem}
\label{T3}. Let $I_{Y}=\{j_{1},j_{2},...,j_{n},...%
\},~j_{1}<j_{2}<...<j_{n}<..., ~\lambda _{j_{n}}<0,\left\vert
\lambda_{j_{n+1}}\right\vert >\left\vert \lambda _{j_{n}}\right\vert ,n\in 
\mathbb{N}.~$If:

\begin{enumerate}
\item there exists $\rho >0$ such that$~\left\vert \lambda
_{j_{1}}\right\vert >\rho ,$ $\left\vert \lambda _{j_{m}}-\lambda
_{j_{n}}\right\vert >\left\vert m-n\right\vert \rho ,$

\item $\sum_{n\in I_{Y}}\frac{1}{\left\vert \lambda _{n}\right\vert }%
<+\infty ,$

then the family (\ref{-exp}) is strongly minimal on $\left[ 0,t_{1}\right] $
\ for any%
\begin{equation*}
t_{1}>\lim\limits_{j\rightarrow \infty ,j\in I_{Y}}\frac{2\ln \left\vert
b_{j}\right\vert }{\lambda _{j}}.
\end{equation*}
\end{enumerate}
\end{theorem}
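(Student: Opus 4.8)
The plan is to derive Theorem~\ref{T3} from Theorem~\ref{T2} by verifying that the two structural hypotheses of the latter — the minimality of the pure exponential family $\{\exp(\lambda_j t),\ j\in I_Y,\ t\in[0,t_1]\}$ and the growth bound (\ref{bio_est}) on the norms of its biorthogonal family — are consequences of the spectral conditions (1) and (2). The remaining hypotheses of Theorem~\ref{T2}, namely that $b_j\neq 0$ and that $\lim_{j\to\infty}\tfrac{2\ln|b_j|}{\lambda_j}$ is finite with $t_1$ exceeding this limit, are exactly the assumptions implicit in the statement of Theorem~\ref{T3}, so once minimality and (\ref{bio_est}) are established, Theorem~\ref{T2} applies verbatim and $\{b_j\exp(-\lambda_j t)\}$ — which in the present case $b\in Y$ coincides with the family (\ref{-exp}) — is strongly minimal.

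First I would record what conditions (1) and (2) say about the distribution of the spectrum. The gap condition $|\lambda_{j_m}-\lambda_{j_n}|>|m-n|\rho$ makes $\{\lambda_{j_n}\}$ a uniformly separated sequence on the negative real axis, while $\sum_{n}1/|\lambda_n|<\infty$ forces the counting function to satisfy $n(r)=o(r)$, i.e.\ the sequence has density zero. The summability of reciprocals also guarantees that the canonical product
\begin{equation*}
L(\lambda)=\prod_{n}\Bigl(1-\frac{\lambda}{\lambda_{j_n}}\Bigr)
\end{equation*}
converges to an entire function of genus zero whose zeros have density zero, and which is therefore of exponential type zero (minimal type). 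This $L$ is the generating function from which Leontiev's interpolation theory produces the biorthogonal system.

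Next I would invoke the results of \cite{Mandelbrojt} and \cite{Leontiev}. The biorthogonal family to $\{\exp(\lambda_j t)\}$ on $[0,t_1]$ is obtained (up to the usual Laplace-transform normalisation) from the interpolating quotients $L(\lambda)/\bigl(L'(\lambda_{j_n})(\lambda-\lambda_{j_n})\bigr)$; the mere existence of these functions already establishes that the exponential family is minimal on $[0,t_1]$. The separation condition (1) bounds $|L'(\lambda_{j_n})|$ from below, preventing the interpolating functions from blowing up, while the density-zero property coming from (2) bounds the growth of $L$ on the real axis. Combining the two, the $L_2[0,t_1]$-norms of the $q_j$ grow slower than any exponential in $|\lambda_j|$: for every $\varepsilon>0$ there is $K_\varepsilon>0$ with $\|q_j(\cdot)\|<K_\varepsilon e^{-\lambda_j\varepsilon}$, which is precisely (\ref{bio_est}).

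I expect the genuine difficulty to lie entirely in this last step: extracting from \cite{Mandelbrojt,Leontiev} the quantitative estimate (\ref{bio_est}) with a constant $K_\varepsilon$ uniform in $j$ for each fixed $\varepsilon$. This requires simultaneous control of the lower bound on $|L'(\lambda_{j_n})|$ (supplied by the gap condition) and of the growth of $L$ (supplied by density zero), and the two hypotheses are tailored precisely so that these competing estimates combine to yield subexponential growth of the biorthogonal norms. Once (\ref{bio_est}) and minimality are in hand, Theorem~\ref{T2} gives the strong minimality of (\ref{-exp}) for every $t_1>\lim_{j\to\infty,\,j\in I_Y}\tfrac{2\ln|b_j|}{\lambda_j}$, completing the proof.
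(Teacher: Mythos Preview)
Your proposal is correct and follows exactly the paper's approach: the paper's proof is the single sentence that conditions (1) and (2) suffice, via the results of \cite{Mandelbrojt} and \cite{Leontiev}, to establish the biorthogonal estimate (\ref{bio_est}), after which Theorem~\ref{T2} applies. You have simply supplied the details behind that sentence --- the canonical product, the role of the gap condition for the lower bound on $|L'(\lambda_{j_n})|$, and the density-zero growth control --- which the paper leaves to the cited references.
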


\begin{proof}
The proof is derived from the fact that conditions specified in the theorem
are adequate to establish the validity of (\ref{bio_est}).
\end{proof}

\begin{corollary}
\label{C3} \ If conditions of Theorem \ref{T3} hold and series (\ref{x_z(t1)}%
) converges for any $x_{Z}^{0}\in Z$, then system (\ref{CS1})--(\ref{P}) is
partially null controllable on $[0,t_{1}]$ for any%
\begin{equation*}
t_{1}>\lim\limits_{j\rightarrow \infty ,j\in I_{Y}}\frac{2\ln \left\vert
b_{j}\right\vert }{\lambda _{j}}.
\end{equation*}
\end{corollary}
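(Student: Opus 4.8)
The plan is to obtain Corollary \ref{C3} as an immediate synthesis of the two preceding results, Theorem \ref{T3} and Theorem \ref{T1}, with no new analysis required; the only work is to check that the hypotheses of the two theorems dovetail over the same range of the horizon $t_1$. Theorem \ref{T3} supplies strong minimality of the exponential family (\ref{-exp}), while Theorem \ref{T1} converts that strong minimality into partial null controllability provided the series (\ref{x_z(t1)}) converges.

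Concretely, I would fix an arbitrary $t_1$ with $t_1>\lim_{j\rightarrow\infty,\,j\in I_Y}\frac{2\ln|b_j|}{\lambda_j}$. Assumptions (1) and (2) of Theorem \ref{T3}, namely the gap condition $|\lambda_{j_m}-\lambda_{j_n}|>|m-n|\rho$ together with the summability $\sum_{n\in I_Y}\frac{1}{|\lambda_n|}<+\infty$, are precisely the standing hypotheses of the corollary, so Theorem \ref{T3} applies and the family (\ref{-exp}) is strongly minimal on $[0,t_1]$ for every such $t_1$. This is the first of the two inputs demanded by Theorem \ref{T1}. The second input, convergence of the series (\ref{x_z(t1)}) for every $x_Z^0\in Z$, is the remaining standing hypothesis of the corollary, hence available at no cost.

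With both inputs in hand I would invoke Theorem \ref{T1}, whose equivalence states that, for any $t_1$ at which (\ref{x_z(t1)}) converges, partial null controllability of equation (\ref{CS1}) on $[0,t_1]$ is equivalent to strong minimality of (\ref{-exp}). The direction asserting that strong minimality implies partial null controllability then yields the conclusion on $[0,t_1]$. Since, for the projection $P$ of (\ref{P}), the condition $Px(t_1)=0$ of Definition \ref{D2.1} reduces to $y(t_1)=0$, partial null controllability of the coupled system (\ref{CS1})--(\ref{P}) coincides with that of equation (\ref{CS1}); the conclusion therefore transfers verbatim to the system, and as $t_1$ was an arbitrary point above the threshold it holds for all such $t_1$.

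Since both component theorems are already proved, there is no genuine obstacle here. The only points needing attention are the bookkeeping facts that the lower bound on $t_1$ is literally the same expression $\lim_{j\rightarrow\infty,\,j\in I_Y}\frac{2\ln|b_j|}{\lambda_j}$ in Theorem \ref{T3} and in the corollary, and that the series-convergence hypothesis of the corollary is identical to the one under which the equivalence of Theorem \ref{T1} is valid; both checks are immediate from the statements, so the corollary follows.
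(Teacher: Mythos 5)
Your proposal is correct and is exactly the argument the paper intends: Corollary \ref{C3} is stated without proof precisely because it is the immediate combination of Theorem \ref{T3} (strong minimality of the family (\ref{-exp}) for every $t_{1}$ above the threshold) with the equivalence of Theorem \ref{T1} under the convergence hypothesis on the series (\ref{x_z(t1)}). Your bookkeeping checks --- that the lower bound on $t_{1}$ is the same expression in both statements, and that the projection $P$ makes partial null controllability of the system coincide with the condition $y(t_{1})=0$ --- are the only points of substance, and you handle them correctly.
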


\subsection{Example 1}

Consider the system of parabolic equations in the form (\ref{e1}):

\begin{equation}
\left\{ 
\begin{array}{c}
y_{t}^{\prime }=\triangle y+y+z+xu\left( t\right) , \\ 
z_{t}^{\prime }=z_{xx}^{\prime \prime },%
\end{array}%
\right. \left( t,x\right) \in \lbrack 0,+\infty )\times \left[ 0,\pi \right]
,  \tag{Ex1}  \label{Ex1}
\end{equation}

\begin{eqnarray}
y\left( 0,x\right) &=&y_{0}\left( x\right) ,z\left( 0,x\right) =z_{0}\left(
x\right) ,x\in L_{2}\left[ 0,\pi \right],  \notag \\
y\left( t,0\right) &=&y\left( t,\pi \right) =0,  \notag \\
z\left( t,0\right) &=&z\left( t,\pi \right) =0,  \notag
\end{eqnarray}%
where 
$y_0(x) ,z_0(x) \in L_{2}\left[ 0,\pi \right] ,$ $b\left( x\right) =\left( 
\begin{array}{c}
x \\ 
0%
\end{array}%
\right) ,u\left( t\right) \in L_{2}\left[ 0,t_{1}\right] .$

Let $P=\left( 
\begin{array}{cc}
1 & 0 \\ 
0 & 0%
\end{array}%
\right)$. In this case

$Y=\left\{ \left( 
\begin{array}{c}
y\left( x\right) \\ 
0%
\end{array}%
\right) ,y\left( \cdot \right) \in L_{2}\left[ 0,\pi \right] \right\},
Z=\left\{ \left( 
\begin{array}{c}
0 \\ 
z\left( x\right)%
\end{array}%
\right) ,z\left( \cdot \right) \in L_{2}\left[ 0,\pi \right] \right\},$

\begin{equation*}
A=\left( 
\begin{array}{cc}
\triangle +1 & 1 \\ 
0 & \vartriangle%
\end{array}%
\right) ,b=\left( 
\begin{array}{c}
x \\ 
0%
\end{array}%
\right) ,
\end{equation*}%
\begin{equation*}
A_{Y}=\vartriangle +1,A_{Z}=\vartriangle
\end{equation*}
with domains \cite[page 214, Example 4.6.6, page 241, Example 4.10.1]%
{Balakrishnan}

$D\left( A_{Y}\right) =D\left( A_{Z}\right) =\left\{ f,f^{\prime }~\mathrm{%
a.c.:}f^{\prime },f^{\prime \prime }\in L_{2}\left[ 0,\pi \right] \mathrm{%
~~and~}f\left( 0\right) =f\left( \pi \right) =0\right\} .$

After computations and enumeration we obtain:

1) $I_{Y}=\left\{ 1,3,5,...\right\} ,~I_{z}=\left\{ 2,4,6,...\right\} $

2) the eigenvalues $\lambda _{j}~$and eigenvectors $\varphi _{j},j\in 
\mathbb{N}~$of the operator $A$ are:

\begin{equation}
\lambda _{2m-1}=1-m^{2},\varphi _{2m-1}\left( x\right) =\sqrt{\frac{2}{\pi }}%
\left( 
\begin{array}{c}
\sin mx \\ 
0%
\end{array}%
\right) ,j=2m-1\in I_{Y},m\in \mathbb{N},  \tag{phiEY}  \label{phiEMY}
\end{equation}

\begin{equation}
\lambda _{2m}=-m^{2},\varphi _{2m}\left( x\right) =\sqrt{\frac{2}{\pi }}%
\left( 
\begin{array}{c}
-\sin mx \\ 
~~~\sin mx%
\end{array}%
\right) ,j=2m\in I_{Z},m\in \mathbb{N},  \tag{ phiEZ}  \label{phiEZ}
\end{equation}

3) the eigenvalues $\lambda _{i}$ and eigenvectors $\psi _{j}$ of the
operator $A^{\ast }$ are:

\begin{equation}
\lambda _{2m-1}=1-m^{2},\psi _{2m-1}\left( x\right) =\sqrt{\frac{2}{\pi }}%
\left( 
\begin{array}{c}
\sin mx \\ 
\sin mx%
\end{array}%
\right) ,j=2m-1\in I_{Y},~m\in \mathbb{N},  \tag{psiEY}  \label{psiEY}
\end{equation}
\begin{equation}
\lambda _{2m}=-m^{2},\psi _{2m}\left( x\right) =\sqrt{\frac{2}{\pi }}\left( 
\begin{array}{c}
0 \\ 
\sin mx%
\end{array}%
\right) ,j=2m\in I_{Z},~m\in \mathbb{N}.  \tag{psiEZ}  \label{psiEZ}
\end{equation}

Substituting (\ref{psiEY}) and (\ref{psiEZ}) to the formula $b_{j}=(b,\psi
_{j}),~j\in \mathbb{N}$, we obtain:

\begin{equation}
b_{j}=\left\{ 
\begin{tabular}{ll}
$\int_{0}^{\pi }x\sin mxdx=$ & $\frac{\left( -1\right) ^{m-1}}{m}\pi
,j=2m-1\in I_{Y},m\in \mathbb{N},$ \\ 
$~~0,~$ & $~~j=2m\in I_{Z},m\in \mathbb{N}.$%
\end{tabular}%
\right.  \tag{bj}  \label{bj}
\end{equation}

Using the characterization of Riesz bases via biorthogonal sequences
developed in \cite{Stoeva} (see Theorem 2.1), one can prove, that

\begin{lemma}
\label{L3}

The sequence

\begin{equation}
\left\{ \sqrt{\frac{2}{\pi }}\left( 
\begin{array}{c}
\sin mx \\ 
0%
\end{array}%
\right) ,\sqrt{\frac{2}{\pi }}\left( 
\begin{array}{c}
-\sin mx \\ 
~~~\sin mx%
\end{array}%
\right) ,~m\in \mathbb{N}\right\}  \tag{B1}  \label{B1}
\end{equation}%
and sequence 
\begin{equation}
\left\{ \sqrt{\frac{2}{\pi }}\left( 
\begin{array}{c}
\sin mx \\ 
~~~\sin mx%
\end{array}%
\right) ,\sqrt{\frac{2}{\pi }}\left( 
\begin{array}{c}
0 \\ 
~~~\sin mx%
\end{array}%
\right) ,~m\in \mathbb{N}\right\}  \tag{B1*}  \label{B1*}
\end{equation}%
are a Riesz basic of $L_{2}\left( \left[ 0,\pi \right] ,\mathbb{R}%
^{2}\right) .~$\newline
\end{lemma}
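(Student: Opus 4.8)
The plan is to identify both (\ref{B1}) and (\ref{B1*}) as images of a common orthonormal basis of $L_{2}([0,\pi],\mathbb{R}^{2})$ under one fixed, boundedly invertible, ``block-diagonal'' operator, and then to read off the Riesz-basis property from the biorthogonal characterization of \cite{Stoeva}. Write $e_{m}=\sqrt{2/\pi}\,\sin mx$, so that $\{e_{m}:m\in\mathbb{N}\}$ is an orthonormal basis of $L_{2}[0,\pi]$ and hence $\{(e_{m},0),(0,e_{m}):m\in\mathbb{N}\}$ is an orthonormal basis of $L_{2}([0,\pi],\mathbb{R}^{2})$, where $(a,b)$ denotes the column vector with entries $a,b\in L_{2}[0,\pi]$. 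In this notation (\ref{B1}) is $\{(e_{m},0),(-e_{m},e_{m})\}$ and (\ref{B1*}) is $\{(e_{m},e_{m}),(0,e_{m})\}$.

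First I would check that (\ref{B1}) and (\ref{B1*}) form a biorthogonal pair, using $\langle(a,b),(c,d)\rangle=\langle a,c\rangle+\langle b,d\rangle$ together with $\langle e_{m},e_{n}\rangle=\delta_{mn}$. A direct computation of the four families of inner products gives $\langle(e_{m},0),(e_{n},e_{n})\rangle=\delta_{mn}$ and $\langle(-e_{m},e_{m}),(0,e_{n})\rangle=\delta_{mn}$, while the two cross terms vanish; thus (\ref{B1*}) is precisely the biorthogonal system of (\ref{B1}).

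Next I would verify the two ingredients on which \cite{Stoeva}, Theorem~2.1 rests: completeness and a Bessel bound for each family. Completeness is immediate, since the linear span of (\ref{B1}) contains $(e_{m},0)$ and $(e_{m},0)+(-e_{m},e_{m})=(0,e_{m})$, hence the whole orthonormal basis, and symmetrically for (\ref{B1*}). For the Bessel estimate, expand an arbitrary element as $a=\sum_{m}a_{m}e_{m}$, $b=\sum_{m}b_{m}e_{m}$; the coefficient sequences of $(a,b)$ against (\ref{B1}) are $\{a_{m}\}$ and $\{b_{m}-a_{m}\}$, and against (\ref{B1*}) they are $\{a_{m}+b_{m}\}$ and $\{b_{m}\}$. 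Each of the resulting sums is bounded by $3\|(a,b)\|^{2}$ via $|a_{m}\pm b_{m}|^{2}\le 2|a_{m}|^{2}+2|b_{m}|^{2}$, so both families are Bessel. A complete biorthogonal pair whose two members are both Bessel is a pair of Riesz bases (the lower Riesz bound of one family being $1/B$, where $B$ is the Bessel bound of its biorthogonal partner), which is exactly the content of \cite{Stoeva}, Theorem~2.1.

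The conceptual heart of the argument, and the only place where anything could go wrong, is the uniformity in $m$. Passing from the orthonormal basis $\{(e_{m},0),(0,e_{m})\}$ to (\ref{B1}) applies, on each index $m$ separately, the single fixed matrix $\left(\begin{smallmatrix}1&-1\\0&1\end{smallmatrix}\right)$ (and $\left(\begin{smallmatrix}1&0\\1&1\end{smallmatrix}\right)$ for (\ref{B1*})), which is invertible and independent of $m$. Consequently the induced block-diagonal operator is bounded with bounded inverse, no cancellation or accumulation across blocks can degrade the two-sided estimate, and the image of an orthonormal basis under such an operator is by definition a Riesz basis. I expect the bookkeeping of the Bessel constant to be the most error-prone step, but it presents no genuine difficulty.
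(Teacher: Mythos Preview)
Your proof is correct and follows essentially the same route as the paper: invoke \cite{Stoeva}, Theorem~2.1, by verifying completeness and showing that both (\ref{B1}) and (\ref{B1*}) are Bessel sequences with constant $3$. The paper argues completeness of (\ref{B1}) dually (orthogonality to all elements forces $f=0$) rather than by exhibiting the orthonormal basis in the span, and it leaves the biorthogonality implicit from the earlier general computations (\ref{ortY})--(\ref{ortYZ}); your explicit check and the block-diagonal $\left(\begin{smallmatrix}1&-1\\0&1\end{smallmatrix}\right)$ interpretation are helpful additions but do not change the substance.
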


\begin{proof}
We will prove, that 1) sequence (\ref{B1}) is complete, and 2) sequences (%
\ref{B1}) and (\ref{B1*}) are Bessel sequences.

1) If for $f\left( x\right) =\left( 
\begin{array}{c}
f_{Y}\left( x\right) \\ 
f_{Z}\left( x\right)%
\end{array}%
\right) ,$ where $~f_{Y}\left( x\right) ,f_{Z}\left( x\right) \in L_{2}\left[
0,\pi \right] ,$ equalities $\left( \varphi _{2m-1},f\right) =0,~\left(
\varphi _{2m},f\right) =0$ hold for any $m\in \mathbb{N}$, i.e. 
\begin{eqnarray*}
\left( \sin mx,f_{Y}\right) &=&0, \\
-\left( \sin mx,f_{Y}\right) +\left( \sin mx,f_{Z}\right) &=&0, \\
\forall m &\in &\mathbb{N},
\end{eqnarray*}%
then from the completeness of the sequence $\{\sin mx, m\in \mathbb{N}\}$ in 
$L_{2}\left[ 0,\pi \right] ,$ it follows, $f_{Y}=f_{Z}=0.$

Hence, sequence (\ref{B1}) is complete.

2) For any $f\left( x\right) =\left( 
\begin{array}{c}
f_{Y}\left( x\right) \\ 
f_{Z}\left( x\right)%
\end{array}%
\right) ,$ where$~f_{Y}\left( x\right) ,f_{Z}\left( x\right) \in L_{2}\left[
0,\pi \right] ,$ we have

\begin{equation*}
\sum_{m=1}^{\infty }\left\vert \left( \varphi _{2m-1},f\right) \right\vert
^{2}+\sum_{m=1}^{\infty }\left\vert \left( \varphi _{2m},f\right)
\right\vert ^{2}\leq
\end{equation*}

\begin{equation*}
\leq 3\sum_{m=1}^{\infty }\left\vert \left( \sqrt{\frac{2}{\pi }}\sin
mx,f_{Y}\right) \right\vert ^{2}+2\sum_{m=1}^{\infty }\left\vert \left( 
\sqrt{\frac{2}{\pi }}\sin mx,f_{Z},\right) \right\vert ^{2}<+\infty .
\end{equation*}%
\newline

Hence, sequence (\ref{B1}) is a Bessel sequence.

By the same way, for any $f\left( x\right) =\left( 
\begin{array}{c}
f_{Y}\left( x\right) \\ 
f_{Z}\left( x\right)%
\end{array}%
\right) ,$ where$~f_{Y}\left( x\right) ,f_{Z}\left( x\right) \in L_{2}\left[
0,\pi \right] ,$ we have%
\begin{equation*}
\sum_{m=1}^{\infty }\left\vert \left( \psi _{2m-1},f\right) \right\vert
^{2}+\sum_{m=1}^{\infty }\left\vert \left( \psi _{2m},f\right) \right\vert
^{2}\leq
\end{equation*}

\begin{equation*}
\leq 2\sum_{m=1}^{\infty }\left\vert \left( \sqrt{\frac{2}{\pi }}\sin
mx,f_{Y}\right) \right\vert ^{2}+3\sum_{m=1}^{\infty }\left\vert \left( 
\sqrt{\frac{2}{\pi }}\sin mx,f_{Z}\right) \right\vert ^{2}<+\infty .
\end{equation*}%
Hence the sequence (\ref{B1*}) is a Bessel sequence also.

This proves the lemma.
\end{proof}

Lemma \ref{L3} shows that equation (\ref{Ex1}) satisfies all the conditions
of equation (\ref{CS1}).

\medskip For $j=2m-1,m\in \mathbb{N},$ we have $e^{-2\lambda
_{j}t_{1}}\left( S_{Z}\left( t_{1}\right) x_{Z}^{0},\psi _{Zj}\right) =$

$=e^{-\lambda _{2m-1}t_{1}}\left( \sum_{n=1}^{\infty }\left( x_{Z\left(
2n\right) },\psi _{Z\left( 2n\right) }\right) e^{\lambda _{2n}t_{1}}\varphi
_{Z\left( 2n\right) },\psi _{Z\left( 2m-1\right) }\right) =$

$=e^{\left( m^{2}-1\right) t_{1}}\left( \dint_{0}^{\pi }\left(
\sum_{n=1}^{\infty }\sqrt{\frac{2}{\pi }}\left( \dint_{0}^{\pi }z^{0}\left(
\xi \right) \sin n\xi d\xi \right) e^{-n^{2}t_{1}}\sqrt{\frac{2}{\pi }}\sin
nx\right) \sqrt{\frac{2}{\pi }}\sin mx\right) dx=$

$=e^{\left( m^{2}-1\right) t_{1}}e^{-m^{2}t_{1}}\frac{2}{\pi }\left(
\dint_{0}^{\pi }z^{0}\left( x\right) \sin mxdx\right) dx=\sqrt{\frac{2}{\pi }%
}e^{-t_{1}}\left( \dint_{0}^{\pi }z^{0}\left( x\right) \sin mdx\right) ,$

so

\begin{equation*}
\sum_{\substack{ j=1,  \\ j=1,3,5,...}}^{\infty }\left\vert e^{-\lambda
_{j}t_{1}}\left( S_{Z}\left( t_{1}\right) x_{Z}^{0},\psi _{Zj}\right)
\right\vert ^{2}=
\end{equation*}

\begin{equation*}
=\frac{2}{\pi }e^{-2t_{1}}\sum_{m=1}^{\infty }\left\vert \left(
\int_{0}^{\pi }z^{0}\left( x\right) \sin mxdx\right) dx\right\vert
^{2}<+\infty .
\end{equation*}

Also we have

$e^{-\lambda _{j}t_{1}}b_{j}=e^{\left( m^{2}-1\right) t_{1}}\frac{\left(
-1\right) ^{m-1}}{m}\pi =\pi e^{-1}\frac{\left( -1\right) ^{m-1}}{m}%
e^{m^{2}t_{1}},j=2m-1\in I_{Y},m\in \mathbb{N}.$

When the family

\begin{equation*}
\left\{ \frac{\left( -1\right) ^{m-1}}{m}e^{m^{2}t_{1}},m=1,2,...,.\right\}
\end{equation*}%
is strongly minimal?

We have $\lim\limits_{m\rightarrow \infty }\ln \left( \frac{1}{m}\right) 
\frac{1}{-m^{2}}=\allowbreak \lim\limits_{m\rightarrow \infty }\frac{1}{%
\left( \ln m\right) m^{2}}$ $=0.$

In accordance with Theorem 3 system ~(\ref{Ex1}) is partially
null-controllable on $[0,t_{1}]$~for any $t_{1}>0.\ $

However there is no $t_{1}>0,$ such that system (\ref{Ex1})\ is completely
null-controllable on $[0,t_{1}]$~, because the second equation of this
system doesn't depend on a control.

\bigskip

\section{The case $\protect\sigma _{YZ}\neq \varnothing $}

\subsection{Eigenvectors of the operators $A$ and $A^{\ast }$}

If $j\in I_{Y}\backslash I_{YZ}$ and $k\in I_{Z}\backslash I_{YZ},~$we are
in conditions of the previous section, so eigenvectors of the operator $A$
and $A^{\ast }$ are obtained in the previous section andthey satisfy satisfy
orthogonality conditions (\ref{ortYZ}).

If $m\in $ $I_{YZ},$ the eigenvalue $\lambda _{m}$ of the operator 
\begin{equation*}
A=\left( 
\begin{array}{cc}
A_{Y} & A_{YZ} \\ 
0 & A_{Z}%
\end{array}%
\right)
\end{equation*}

may have two linear independent eigenvectors.

The first one is $\phi _{1m}$ $=\left( 
\begin{array}{c}
\phi _{1Ym} \\ 
0%
\end{array}%
\right) ,~$ where $\phi _{1Ym}\neq 0\ $is an eigenvector of the operator $%
A_{Y}.$

The second one is: \textrm{$\phi $}$_{2m}=\left( 
\begin{array}{c}
\mathrm{\phi }_{2Ym} \\ 
\phi _{2Zm}%
\end{array}%
\right) ,~$where $\phi _{2Zm}$ is an eigenvector of the operator $A_{Z},~$
provided that there exists a solution of the operator equation 
\begin{equation}
A_{Y}\mathrm{\phi }_{2Ym}+A_{YZ}\phi _{2Zm}=\lambda _{m}\mathrm{\phi }_{2Ym}
\tag{AYZ}  \label{AYZ}
\end{equation}%
for $m\in $ $I_{YZ}.$ In this case the vector $\mathrm{\phi }_{2Ym}$\ \ is a
solution of above equation.

Since $\lambda _{m}\ $is an eigenvalue of the operator $A_{Y},$ the operator
equation (\ref{AYZ}) may be not solvable. In this case the operator $A$ has
an unique eigenvector $\phi _{1m}$ $=\left( 
\begin{array}{c}
\phi _{1Ym} \\ 
0%
\end{array}%
\right) ~$

The same considerations can be performed for the operator%
\begin{equation*}
A^{\ast }=\left( 
\begin{array}{cc}
A_{Y}^{\ast }, & 0 \\ 
A_{YZ}^{\ast } & A_{Z}^{\ast }%
\end{array}%
\right) ,
\end{equation*}
i.e .for $m\in $ $I_{YZ}$ the eigenvalue $\bar{\lambda}_{m}$ may have two
linear independent eigenvectors.

The first one is \textrm{$\psi $}$_{2m}\mathrm{=}\left( 
\begin{array}{c}
\mathrm{0} \\ 
\psi _{2Zm}%
\end{array}%
\right) ,~$ where $\phi _{1Ym}\neq 0\ $is an eigenvector of the operator $%
A_{Z}^{\ast }.$

The second one is: \textrm{$\psi $}$_{1m}\mathrm{=}\left( 
\begin{array}{c}
\psi _{1Ym} \\ 
\mathrm{\psi }_{1Zm}%
\end{array}%
\right) ,~$where $\psi _{1Ym}$ is an eigenvector of the operator $%
A_{Y}^{\ast },~$ provided that there exists a solution the operator equation 
\begin{equation}
A_{YZ}^{\ast }\psi _{1Ym}+A_{Z}^{\ast }\mathrm{\psi }_{1Zm}=\bar{\lambda}_{m}%
\mathrm{\psi }_{1Zm}.  \tag{AYZ*}  \label{AYZ*}
\end{equation}

Since $\bar{\lambda}_{m}\ $is an eigenvalue of the operator $A_{Z}^{\ast },$
the operator equation (\ref{AYZ*}) may be not solvable. In this case the
operator $A$ has an unique eigenvector $\psi _{2m}=\left( 
\begin{array}{c}
0 \\ 
\psi _{2Zm}%
\end{array}%
\right) .$

%

If equation (\ref{AYZ}) does not admit a solution, then the vector $\phi
_{1m}$ in n unique eigenvector of the \ operator $A$ for the eigenvalue $%
\lambda _{m.}.$

In this case consider the equation

\begin{equation}
\left( A_{Y}-\lambda _{m}I\right) \mathrm{\phi }_{2Ym}=c_{m}\phi
_{1Ym}-A_{YZ}\phi _{2Zm}.  \tag{AYZG}  \label{AYZG}
\end{equation}

\begin{theorem}
\bigskip \label{T4}
\end{theorem}

The operator equation

\begin{equation}
A\mathrm{\phi }_{2m}-\lambda _{m}\mathrm{\phi }_{2m}=c_{m}\phi _{1m}, 
\tag{gev}  \label{gev}
\end{equation}%
has a solution \textrm{$\phi $}$_{2m}=\left( 
\begin{array}{c}
\mathrm{\phi }_{2Ym} \\ 
\phi _{2Zm}%
\end{array}%
\right) $ if and only if

\begin{equation}
c_{m}=\left( A_{YZ}\phi _{2Zm},\psi _{1Ym}\right) .  \tag{c}  \label{c}
\end{equation}

%
%
%
%

\begin{proof}
Equation (\ref{gev}) is equivalent to equation (\ref{AYZG}). 

Equation (\ref{AYZG}) has a solution \ if and only if \cite{Krein1971}

\begin{equation*}
\left( c\phi _{1Ym}-A_{YZ}\phi _{2Zm},\psi _{1Ym}\right) =0.
\end{equation*}

We have 
\begin{equation*}
\left( c_{m}\phi _{1Ym}-A_{YZ}\phi _{2Zm},\psi _{1Ym}\right) =
\end{equation*}%
\begin{eqnarray*}
&=&c_{m}\left( \phi _{1Ym},\psi _{1Ym}\right) -\left( A_{YZ}\phi _{2Zm},\psi
_{1Ym}\right) = \\
&=&c_{m}-\left( A_{YZ}\phi _{2Zm},\psi _{1Ym}\right) ,
\end{eqnarray*}%
i.e. the last condition is equivalent to the condition (\ref{c}).

This proves the theorem.
\end{proof}

If $c_{m}=0,$ then \textrm{$\phi $}$_{2m}\ $is a solution of the equation%
\begin{equation*}
A\mathrm{\phi }_{2m}-\lambda _{m}\mathrm{\phi }_{2m}=0,
\end{equation*}%
so \textrm{$\phi $}$_{2m}$ is the second eigenvector of the operator $A$ of
the same simple eigenvalue $\lambda _{m}$ having the multiplicity $2.$

If $c_{m}\neq 0$, then the \textrm{$\phi $}$_{2m}$ is a generalized
eigenvector of the operator $A$, corresponding to the eigenvalue $\lambda
_{m}$, having the multiplicity $2$. \bigskip

If equation (\ref{AYZ*}) does not admit a solution, then the vector \textrm{$%
\psi $}$_{2m}$ in an unique eigenvector of the \ operator $A$ for the
eigenvalue $\lambda _{m.}.$

In this case consider the equation

\begin{equation}
\left( A_{Z}^{\ast }-\bar{\lambda}_{m}I\right) \mathrm{\psi }%
_{1Zm}=c_{m}\psi _{2Zm}-A_{YZ}^{\ast }\psi _{1Ym},  \tag{AYZG*}
\label{AYZG*}
\end{equation}

\begin{theorem}
The operator equation 
\begin{equation}
A^{\ast }\mathrm{\psi }_{1m}-\lambda _{m}\mathrm{\psi }_{1m}=c_{m}\psi _{2m},
\tag{gev*}  \label{gev*}
\end{equation}%
has a solution \textrm{$\psi $}$_{1m}=\left( 
\begin{array}{c}
\psi _{1Ym} \\ 
\mathrm{\psi }_{1Zm}%
\end{array}%
\right) ,~$where $\mathrm{\phi }_{2Zm}\neq 0$ is an eigenvector of the
operator$A_{Z},\ $corresponding to its eigenvalue $\lambda _{m},$ if and
only if 
\begin{equation}
c_{m}=\left( \phi _{2Zm},A_{YZ}^{\ast }\psi _{1Ym}\right) .  \tag{c*}
\label{c*}
\end{equation}
\end{theorem}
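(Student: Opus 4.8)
The plan is to mirror the proof of the preceding theorem (Theorem~\ref{T4}), transporting every step to the adjoint / second-component setting, since (\ref{gev*}) is exactly the dual of (\ref{gev}). First I would pass from the coupled identity (\ref{gev*}) to its block form. Writing $A^{\ast}$ in its $2\times 2$ block representation and using $\psi_{2m}=\left(\begin{array}{c}0\\\psi_{2Zm}\end{array}\right)$ together with $\psi_{1m}=\left(\begin{array}{c}\psi_{1Ym}\\\psi_{1Zm}\end{array}\right)$, equation (\ref{gev*}) splits into two component equations. The $Y$-component merely restates that $\psi_{1Ym}$ is an eigenvector of $A_{Y}^{\ast}$ (with eigenvalue $\bar{\lambda}_{m}$), and so holds automatically by the choice of $\psi_{1Ym}$; the $Z$-component is precisely equation (\ref{AYZG*}). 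Hence (\ref{gev*}) is solvable for $\psi_{1m}$ if and only if (\ref{AYZG*}) is solvable for $\psi_{1Zm}$.

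Next I would invoke the Fredholm solvability criterion \cite{Krein1971} applied to (\ref{AYZG*}), whose left-hand operator is $A_{Z}^{\ast}-\bar{\lambda}_{m}I$. Its Hilbert-space adjoint is $A_{Z}-\lambda_{m}I$, and since $\lambda_{m}$ is a simple eigenvalue of $A_{Z}$, the kernel of this adjoint is one-dimensional, spanned by the eigenvector $\phi_{2Zm}$. The criterion therefore asserts that (\ref{AYZG*}) admits a solution $\psi_{1Zm}$ if and only if its right-hand side is orthogonal to $\phi_{2Zm}$, i.e. $\left(c_{m}\psi_{2Zm}-A_{YZ}^{\ast}\psi_{1Ym},\phi_{2Zm}\right)=0$.

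Expanding this pairing gives $c_{m}\left(\psi_{2Zm},\phi_{2Zm}\right)=\left(A_{YZ}^{\ast}\psi_{1Ym},\phi_{2Zm}\right)$. The biorthogonality of the eigenvectors, $\left(\phi_{2m},\psi_{2m}\right)=1$, reduces on the second component to $\left(\phi_{2Zm},\psi_{2Zm}\right)=1$, whence $\left(\psi_{2Zm},\phi_{2Zm}\right)=1$; the solvability condition then collapses to $c_{m}=\left(A_{YZ}^{\ast}\psi_{1Ym},\phi_{2Zm}\right)$, which is exactly (\ref{c*}) up to the inner-product conjugation convention. This is the complete analogue of the computation that reduced (\ref{AYZG}) to (\ref{c}) in Theorem~\ref{T4}.

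The main obstacle is the same as in Theorem~\ref{T4}, now dualized: justifying that the Fredholm alternative applies to $A_{Z}^{\ast}-\bar{\lambda}_{m}I$ and correctly identifying the kernel of its adjoint. One needs $A_{Z}^{\ast}-\bar{\lambda}_{m}I$ to have closed range and $\ker\left(A_{Z}-\lambda_{m}I\right)$ to be exactly $\mathrm{span}\{\phi_{2Zm}\}$; both follow from the standing assumption (I) that $\lambda_{m}$ is an isolated eigenvalue of finite (here simple) multiplicity, combined with the semigroup-generation hypothesis on $A_{Z}$. A secondary point to watch is the bookkeeping of complex conjugates in the pairings, so that the final orthogonality condition is displayed in the precise form (\ref{c*}) rather than as its complex conjugate.
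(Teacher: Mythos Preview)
Your proposal is correct and follows essentially the same route as the paper: reduce (\ref{gev*}) to its $Z$-component (\ref{AYZG*}), apply the Fredholm solvability criterion from \cite{Krein1971} against the kernel vector $\phi_{2Zm}$, and use the normalization $\left(\phi_{2Zm},\psi_{2Zm}\right)=1$ to isolate $c_{m}$. The paper's version is terser (it simply asserts the equivalence with (\ref{AYZG*}) and writes the orthogonality condition with $\phi_{2Zm}$ in the first slot), while you spell out the block decomposition and the justification of Fredholm; but the argument is the same.
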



\begin{proof}
The operator equation (\ref{gev*}) is equivalent to operator equation (\ref%
{AYZG*}). Equation (\ref{AYZG*}) has a solution if and only if \cite%
{Krein1971}%
\begin{equation*}
\left( \phi _{2Zm},c_{m}\psi _{2Zm}-A_{YZ}^{\ast }\psi _{1Ym}\right) =0.
\end{equation*}

We have $\left( \phi _{2Zm},c_{m}\psi _{2Zm}-A_{YZ}^{\ast }\psi
_{1Ym}\right) =c_{m}\left( \phi _{2Zm},\psi _{2Zm}\right) -\left( \phi
_{2Zm},A_{YZ}^{\ast }\psi _{1Ym}\right) .$ i.e. the last condition is
equivalent to the condition (\ref{c*}).

This proves the theorem.
\end{proof}

Conditions (\ref{c}) and (\ref{c*}) are equivalent.

If $c_{m}=0,$ then we obtain equation

\begin{equation}
A^{\ast }\mathrm{\psi }_{1m}-\lambda _{m}\mathrm{\psi }_{1m}=0  \tag{ev*}
\end{equation}%
so the vector $~$\textrm{$\psi $}$_{1m}\mathrm{=}\left( 
\begin{array}{c}
\psi _{1Ym} \\ 
\mathrm{\psi }_{1Zm}%
\end{array}%
\right) $ is a second eigenvector of the operator $A^{\ast },$ corresponding
the same simple eigenvalue $\bar{\lambda}_{m.}$of \ the multiplicity $2$.

If $c_{m}\neq 0,$ then $~$\textrm{$\psi $}$_{1m}$ is a generalized
eigenvector of the operator $A^{\ast },$ corresponding the eigenvalue $\bar{%
\lambda}_{m.}$ of \ the multiplicity $2$.

\begin{lemma}
\label{L2}The sequence $\left\{ c_{m},m\in I_{YZ}\right\} $ is bounded.
\end{lemma}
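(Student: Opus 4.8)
The plan is to combine the explicit formula for $c_{m}$ furnished by Theorem \ref{T4} with the boundedness of the couple operator $A_{YZ}$ and the uniform norm control that hypothesis (II) imposes on the generalized eigenvectors. By condition (\ref{c}) we have
\begin{equation*}
c_{m}=\left( A_{YZ}\phi _{2Zm},\psi _{1Ym}\right) ,\qquad m\in I_{YZ},
\end{equation*}
so the Cauchy--Schwarz inequality together with the boundedness of $A_{YZ}$ immediately yields
\begin{equation*}
\left\vert c_{m}\right\vert \leq \left\Vert A_{YZ}\right\Vert \,\left\Vert \phi _{2Zm}\right\Vert \,\left\Vert \psi _{1Ym}\right\Vert ,\qquad m\in I_{YZ}.
\end{equation*}
This reduces the lemma to showing that $\left\Vert \phi _{2Zm}\right\Vert $ and $\left\Vert \psi _{1Ym}\right\Vert $ stay bounded as $m$ ranges over $I_{YZ}$.

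To control these norms I would appeal to assumption (II): the family of generalized eigenvectors $\left\{ \varphi _{i}\right\} $ of $A$ is a Riesz basis of $X$, and so is the biorthogonal family $\left\{ \psi _{i}\right\} $ of $A^{\ast }$. For any Riesz basis $\left\{ e_{i}\right\} $ there are constants $0<s\leq S<\infty $ with $s\sum \left\vert c_{i}\right\vert ^{2}\leq \left\Vert \sum c_{i}e_{i}\right\Vert ^{2}\leq S\sum \left\vert c_{i}\right\vert ^{2}$ for all finite scalar families; evaluating this on a single coordinate vector gives $\left\Vert e_{i}\right\Vert \leq \sqrt{S}$ for every $i$, so all basis vectors have uniformly bounded norm. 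Hence there are constants $C_{1},C_{2}$ with $\left\Vert \phi _{2m}\right\Vert \leq C_{1}$ and $\left\Vert \psi _{1m}\right\Vert \leq C_{2}$ for every $m\in I_{YZ}$, where $\phi _{2m}$ and $\psi _{1m}$ are the generalized eigenvectors constructed above. Since $\phi _{2Zm}$ is merely the $Z$-component of $\phi _{2m}$ and $\psi _{1Ym}$ the $Y$-component of $\psi _{1m}$, we get $\left\Vert \phi _{2Zm}\right\Vert \leq C_{1}$ and $\left\Vert \psi _{1Ym}\right\Vert \leq C_{2}$; substituting into the previous estimate gives $\left\vert c_{m}\right\vert \leq \left\Vert A_{YZ}\right\Vert C_{1}C_{2}$ for all $m\in I_{YZ}$, which is the desired uniform bound.

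The only genuinely delicate point is the passage in the second step: one must be certain that the vectors $\phi _{2m}$ and $\psi _{1m}$ appearing in the formula for $c_{m}$ are truly members of the Riesz bases guaranteed by (II), including in the generalized-eigenvector case $c_{m}\neq 0$ discussed before the lemma. This is precisely what assumption (II) supplies, since it posits a Riesz basis of \emph{generalized} eigenvectors rather than of eigenvectors alone; thus no ad hoc normalization argument is needed, and the boundedness of $\left\{ c_{m},m\in I_{YZ}\right\} $ follows at once from the uniform upper Riesz bound.
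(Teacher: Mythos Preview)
Your argument is correct and follows essentially the same route as the paper: Cauchy--Schwarz plus boundedness of $A_{YZ}$, then a uniform norm bound on the relevant eigenvectors coming from a Riesz basis. The only cosmetic difference is that the paper bounds $\left\Vert \phi _{2Zm}\right\Vert $ and $\left\Vert \psi _{1Ym}\right\Vert $ directly by invoking Lemma~\ref{L1} (these vectors are eigenvectors of $A_{Z}$ and $A_{Y}^{\ast }$, hence members of Riesz bases of $Z$ and $Y$), whereas you pass through the full Riesz basis of $X$ from assumption (II) and then project; the paper's route sidesteps your closing worry about whether the particular generalized eigenvectors sit in the basis guaranteed by (II).
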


\begin{proof}
We have 
\begin{equation*}
\left\vert c_{m}\right\vert =\left\vert \left( A_{YZ}\phi _{2Zm},\psi
_{1Ym}\right) \right\vert \leq \left\Vert A_{YZ}\right\Vert \left\Vert \phi
_{2Zm}\right\Vert \left\Vert \psi _{1Ym}\right\Vert ,m\in I_{YZ}.
\end{equation*}

The sequences $\left\{ \phi _{2Zm},m\in I_{YZ}\right\} $ and $\left\{ \psi
_{1Ym},m\in I_{YZ}\right\} $ are bounded, because in accordance with Lemma %
\ref{L1} the first one is a Riesz basic of $Z,$ and the second one is a
Riesz basic of $Y.$

This proves the Lemma.
\end{proof}

The eigenvectors $\phi _{1m},~\psi _{2m}~$ and the generalized eigenvectors 
\textrm{$\phi $}$_{2m},$ \textrm{$\psi $}$_{1m}$ of the operators $A$ and $%
A^{\ast }$correspondingly are chosen such that the following orthogonality
conditions hold:


\begin{equation}
\left( 
\begin{array}{c}
\begin{array}{cc}
\left( \phi _{1l},\mathrm{\psi }_{1j}\right) =\delta _{lj} & \left( \phi
_{1l},\psi _{2j}\right) =0,%
\end{array}
\\ 
\begin{array}{cc}
\left( \mathrm{\phi }_{2l},\mathrm{\psi }_{1j}\right) =0 & \left( \mathrm{%
\phi }_{2l},\psi _{2j}\right) =\delta _{lj},%
\end{array}%
\end{array}%
\right) ,l,j\in I_{YZ}.  \tag{Y}  \label{Y}
\end{equation}%
Hence, $\left( \phi _{2l},\mathrm{\psi }_{1j}\right) =\left( \mathrm{\phi }%
_{2Yl},\psi _{1Yj}\right) +\left( \mathbf{\phi }_{2Zl},\mathrm{\psi }%
_{1Zj}\right) =0,~l,j\in I_{YZ}\iff $ 
\begin{equation}
\left( \mathrm{\phi }_{2Yl},\mathbf{\psi }_{1Yj}\right) =-\left( \phi _{2Zl},%
\mathrm{\psi }_{1Zj}\right) ,~l,j\in I_{YZ}.  \tag{YZ}  \label{YZ}
\end{equation}

Lemma \ref{L1} holds true for the case $\sigma _{YZ}\neq \varnothing $ as
well.

\section{ Partial controllability conditions}


Denote: 

$b_{1j}=\left\{ 
\begin{tabular}{ll}
$\left( b,\psi _{j}\right) ,$ & $j\in I_{Y}\backslash I_{YZ},$ \\ 
$\left( b,\mathrm{\psi }_{1j}\right) ,$ & $j\in I_{YZ},$%
\end{tabular}%
\right. ,b_{2j}=\left( b,\mathrm{\psi }_{2j}\right) ,j\in I_{YZ}.$

\bigskip

$b_{j}\left( \tau \right) =\left\{ 
\begin{array}{c}
\begin{tabular}{ll}
$b_{1j}-e^{-\lambda _{j}\left( t_{1}-\tau \right) }\left( S_{Z}\left(
t_{1}-\tau \right) b_{Z},\psi _{Zj}\right) ,$ & $j\in I_{Y}\backslash
I_{YZ}, $ \\ 
$b_{1j}+b_{2j}c_{j}\left( t_{1}-\tau \right) -$ & 
\end{tabular}
\\ 
-e^{-\lambda _{j}\left( t_{1}-\tau \right) }\left( S_{Z}\left( t_{1}-\tau
\right) b_{Z},\mathrm{\psi }_{1Zj}\right) .j\in I_{YZ}.%
\end{array}%
\right. $

\begin{theorem}
\label{T5} Let the series 
\begin{equation}
\sum\limits_{j\in I_{Y}}e^{-2\lambda _{j}t_{1}}\left\vert \left( S_{Z}\left(
t_{1}\right) x_{Z}^{0},\psi _{Zj}\right) \right\vert ^{2}  \tag{xg\_z(t1)}
\label{xg_z(t1)}
\end{equation}%
converges for any $x_{Z}^{0}\in Z.$

Equation (\ref{CS1})--(\ref{P}) is partially null controllable on $[0,t_{1}]$%
~ if and only if the family of exponent%
\begin{equation}
\left\{ e^{-\lambda _{j}t}b_{j}\left( t\right) ,j\in I_{Y},t\in \left[
0,t_{1}\right] \right\}  \tag{-expg}  \label{-expg}
\end{equation}%
is strongly minimal.
\end{theorem}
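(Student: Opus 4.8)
The plan is to reproduce the moment-problem argument of Theorem~\ref{T1} almost verbatim, the only genuinely new feature being the presence of the generalized eigenvectors $\phi_{2m}$, $\psi_{1m}$ with $m\in I_{YZ}$ and the corresponding Jordan action of the semigroup. Since Lemma~\ref{L1} continues to hold when $\sigma_{YZ}\neq\varnothing$, the vectors $\{\psi_{Yj}:j\in I_Y\setminus I_{YZ}\}\cup\{\psi_{1Yj}:j\in I_{YZ}\}$ still form a Riesz basis of $Y$, so partial null-controllability, i.e. $x_Y(t_1)=0$, is equivalent to the vanishing of all the inner products $(x_Y(t_1),\psi_{Yj})$, $j\in I_Y$, with $\psi_{1Yj}$ in place of $\psi_{Yj}$ for $j\in I_{YZ}$.

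First I would expand an arbitrary $x^0\in X$ in the Riesz basis of $X$ consisting of $\varphi_j$ ($j\in I_Y\setminus I_{YZ}$), $\varphi_k$ ($k\in I_Z\setminus I_{YZ}$) and the Jordan pairs $\phi_{1m},\phi_{2m}$ ($m\in I_{YZ}$), the coefficients being the pairings against the dual Riesz basis $\psi_j,\psi_k,\psi_{1m},\psi_{2m}$ normalized by~(\ref{Y}). The new computation is the semigroup action on a Jordan pair: from $A\phi_{1m}=\lambda_m\phi_{1m}$ and $A\phi_{2m}-\lambda_m\phi_{2m}=c_m\phi_{1m}$ (Theorem~\ref{T4}) one obtains $S(t)\phi_{1m}=e^{\lambda_m t}\phi_{1m}$ and $S(t)\phi_{2m}=e^{\lambda_m t}\bigl(\phi_{2m}+c_m t\,\phi_{1m}\bigr)$. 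Applying $S(t_1)$ to $x^0$, and then (with $b$ in place of $x^0$ and $t_1-\tau$ in place of $t_1$) integrating to get $\int_0^{t_1}S(t_1-\tau)bu(\tau)\,d\tau$ exactly as in (\ref{S(t1)x0})--(\ref{S(t1)bu(t)}), the linear factor $c_m(t_1-\tau)$ produced by the Jordan block is precisely what generates the extra summand $b_{2j}c_j(t_1-\tau)$ in the definition of $b_j(\tau)$.

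Substituting into the mild-solution formula~(\ref{eqsol}), forming $Px(t_1)=x_Y(t_1)$ and pairing with $\psi_{Yj}$ (respectively $\psi_{1Yj}$ for $j\in I_{YZ}$), I would use the orthogonality relations~(\ref{Y}),~(\ref{YZ}) on the $I_{YZ}$-block, together with~(\ref{ortY})--(\ref{ortYZ}) on the remaining indices, to cancel the $\varphi_{Yk}$ cross-terms against the $\psi_{Zk}$ terms and to re-sum the $Z$-contributions through the semigroup expansion~(\ref{Sz(t)Xz}). Collecting the state part on the left and the control part on the right then reduces the requirement $x_Y(t_1)=0$, for all $j\in I_Y$, to a moment problem of the same form as~(\ref{LMP}) whose kernels are exactly $e^{-\lambda_j t}b_j(t)$, i.e. the family~(\ref{-expg}).

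Finally, convergence of the series~(\ref{xg_z(t1)}) places the right-hand side of this moment problem in $\ell^2$ for every $x^0\in X$, and then the same moment-problem criterion invoked at the end of the proof of Theorem~\ref{T1} gives $L_2[0,t_1]$-solvability for all admissible data if and only if the kernel family~(\ref{-expg}) is strongly minimal; this is the asserted equivalence. I expect the principal obstacle to be the bookkeeping of the $I_{YZ}$-block: checking that the Jordan action combines with~(\ref{Y})--(\ref{YZ}) to yield precisely the correction $b_{2j}c_j(t_1-\tau)$, and verifying, using the boundedness of $\{c_m:m\in I_{YZ}\}$ from Lemma~\ref{L2}, that the polynomial factors $c_m t_1$ appearing in the data do not spoil the square-summability guaranteed by~(\ref{xg_z(t1)}).
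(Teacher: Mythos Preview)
Your proposal is correct and follows essentially the same route as the paper: expand in the Riesz basis with Jordan pairs, use $S(t)\phi_{2m}=e^{\lambda_m t}(\phi_{2m}+c_mt\,\phi_{1m})$, pair $x_Y(t_1)$ against $\psi_{Yj}$ (resp.\ $\psi_{1Yj}$), simplify via (\ref{ortY})--(\ref{ortYZ}) and (\ref{Y})--(\ref{YZ}) together with (\ref{Sz(t)Xz}), and reduce to the moment problem with kernels (\ref{-expg}); the paper likewise invokes Lemma~\ref{L2} to ensure the $c_jt_1$ factors keep the data in $\ell^2$. The only cosmetic difference is that the paper writes out the two cases $j\in I_Y\setminus I_{YZ}$ and $j\in I_{YZ}$ separately as (\ref{LPRY})--(\ref{LPRYZ}) rather than phrasing everything as a perturbation of Theorem~\ref{T1}.
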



\begin{proof}
Since the family of eigenvectors $\left\{ \varphi _{i},i\in \mathbb{N}%
\right\} ~$of the operator $A$ is a Riesz basic of $X$ and the family of
eigenvectors $\left\{\psi _{i},i\in \mathbb{N}\right\} ~$of the operator $%
A^{\ast }$ is a Riesz basic of $X$ as well , for any $x^{0}=\left( 
\begin{array}{c}
x_{Y}^{0} \\ 
x_{z}^{0}%
\end{array}%
\right) \in X=Y\times Z$ we have

$x^0(t_1):=S\left( t_{1}\right) x^{0}=\sum\limits_{l\in I_{Y}\backslash
I_{YZ}}e^{\lambda _{l}t_{1}}\left( x^{0},\psi _{l}\right) \left( 
\begin{array}{c}
\varphi _{Yl} \\ 
0%
\end{array}%
\right) +$

$+\sum\limits_{l\in I_{YZ}}e^{\lambda _{l}t_{1}}\left[\left( x^{0},\psi
_{1l}\right) \left( 
\begin{array}{c}
\phi _{1Yl} \\ 
0%
\end{array}%
\right) +\left( x^{0},\psi _{2l}\right) \left( c_{j}t_{1}\left( 
\begin{array}{c}
\phi _{1Yl} \\ 
0%
\end{array}%
\right) +\left( 
\begin{array}{c}
\phi _{2Yl} \\ 
\phi _{2Zl}%
\end{array}%
\right) \right)\right] +$


$+\sum\limits_{k\in I_{Z}|I_{YZ}}\left( x^{0},\psi _{k}\right) e^{\lambda
_{k}t_{1}}\left( 
\begin{array}{c}
\varphi _{Yk} \\ 
\varphi _{Zk}%
\end{array}%
\right) ,$ \bigskip \bigskip

$x_{Y}^{0}\left( t_{1}\right):=PS(t_1)x^0 =\sum\limits_{l\in I_{Y}\backslash
I_{YZ}}e^{\lambda _{l}t_{1}}\left( x^{0},\psi _{l}\right) \varphi _{Yl}+$

$+\sum\limits_{l\in I_{YZ,}}e^{\lambda _{l}t_{1}}\left[ \left( x^{0},\psi
_{1l}\right) \phi _{1Yl}+\left( x^{0},\psi _{2l}\right) \left(
c_{j}t_{1}\phi _{1Yl}+\phi _{2Yl}\right) \right] +$

$+\sum\limits_{k\in I_{Z}|I_{YZ}}e^{\lambda _{k}t_{1}}\left( x^{0},\psi
_{k}\right) \varphi _{Yk}.$ \bigskip \bigskip

Using (\ref{ortY})--(\ref{ortYZ}), (\ref{Y})--(\ref{YZ}), we obtain the
following:

If $j\in I_{Y}\backslash I_{YZ},$ then

$\left( x_{Y}^{0}\left( t_{1}\right) ,\psi _{Yj}\right) =\sum\limits_{l\in
I_{Y}\backslash I_{YZ}}\left( x^{0},\psi _{j}\right) e^{\lambda
_{lk}t_{1}}\left( \varphi _{Yl},\psi _{Yj}\right) +$

$+\sum\limits_{l\in I_{YZ}}e^{\lambda _{l}t_{1}}\left( x^{0},\psi
_{2l}\right) \left[ c_{m}t_{1}\left( \phi _{1Yl},\psi _{Yj}\right) +\left(
\phi _{2Yl},\psi _{Yj}\right) \right] +$

$+\sum\limits_{l\in I_{YZ},}e^{\lambda _{l}t_{1}}\left( x^{0},\psi
_{1l}\right) \left( \phi _{1Yl},\psi _{Yj}\right) +$

$+\sum\limits_{k\in I_{Z}|I_{YZ}}\left( x^{0},\psi _{k}\right) e^{\lambda
_{k}t_{1}}\left( \varphi _{Yk},\psi _{Yj}\right) =$ \medskip

$=e^{\lambda _{j}t_{1}}\left( x^{0},\psi _{Yj}\right) +$

$+\sum\limits_{l\in I_{YZ}}\left( x_{Z}^{0},\psi _{2Zl}\right) e^{\lambda
_{l}t_{1}}\left( \phi _{2Yl},\psi _{Yj}\right) +$

$+\sum\limits_{k\in I_{Z}|I_{YZ}}\left( x^{0},\psi _{Zk}\right) e^{\lambda
_{k}t_{1}}\left( \varphi _{Yk},\psi _{Yj}\right) =$ \medskip

$=e^{\lambda _{j}t_{1}}\left( x^{0},\psi _{j}\right) -$

$-\sum\limits_{I_{Z}|I_{YZ}}\left( \ x^{0},\psi _{k}\right) \ e^{\lambda
_{k}t_{1}}(\varphi _{Zk},\psi _{Zj})-\sum\limits_{k\in I_{Z}}\left( \
x^{0},\psi _{k}\right) e^{\lambda _{k}t_{1}}\left( \varphi _{2Zk},\psi
_{Zj}\right) ,$ and since the sequence $\left\{ \varphi _{2Zk,}k\in
I_{Z}\right\} $ is a Riesz basic of $Z,$ we obtain, that

$\sum\limits_{k\in I_{Z}}\left( \ x_{Z}^{0},\psi _{Zk}\right) \ e^{\lambda
_{k}t_{1}}\varphi _{Zk}=S_{Z}\left( t_{1}\right) x_{Z}^{0},$ so%
\begin{equation*}
\left( x_{Y}\left( t_{1}\right) ,\psi _{Yj}\right) =e^{\lambda
_{j}t_{1}}\left( x^{0},\psi _{j}\right) -\left( S_{Z}\left( t_{1}\right)
x_{Z}^{0},\psi _{Zj}\right) .
\end{equation*}

\bigskip If $j\in I_{YZ},$ then

$\left( PS\left( t_{1}\right) x^{0},\mathrm{\psi }_{1Yj}\right) =$%
%
%
%

$=\sum\limits_{l\in I_{YZ}}e^{\lambda _{l}t_{1}}\left( x^{0},\mathrm{\psi }%
_{1l}\right) \left( \mathrm{\phi }_{1Yl},\mathrm{\psi }_{1Yj}\right) +$

$+\sum\limits_{l\in I_{YZ}}e^{\lambda _{l}t_{1}}\left( x^{0},\mathrm{\psi }%
_{2l}\right) \left( c_{j}t_{1}\left( \mathrm{\phi }_{1Yl},\mathrm{\psi }%
_{1Yj}\right) +\left( \mathrm{\phi }_{2Yl},\mathrm{\psi }_{1Yj}\right)
\right) +$

$+\sum\limits_{k\in I_{Z}|I_{YZ}}e^{\lambda _{k}t_{1}}\left( x^{0},\psi
_{k}\right) \left( \varphi _{Yk},\mathrm{\psi }_{1Yj}\right) =$ \medskip

$=e^{\lambda _{j}t_{1}}\left[ \left( x^{0},\mathrm{\psi }_{1j}\right)
+c_{j}t_{1}\left( x^{0},\mathrm{\psi }_{2l}\right) \right]
+\sum\limits_{l\in I_{YZ}}e^{\lambda _{l}t_{1}}\left( x^{0},\mathrm{\psi }%
_{2l}\right) \left( \mathrm{\phi }_{2Yl},\mathrm{\psi }_{1Yj}\right) $

$+\sum\limits_{k\in I_{Z}|I_{YZ}}e^{\lambda _{k}t_{1}}\left( x^{0},\psi
_{k}\right) \left( \varphi _{Yk},\mathrm{\psi }_{1Yj}\right) = e^{\lambda
_{j}t_{1}}\left[ \left( x^{0},\mathrm{\psi }_{1j}\right) +c_{j}t_{1}\left(
x^{0},\mathrm{\psi }_{2l}\right) \right] -$

$-\sum\limits_{l\in I_{YZ}}e^{\lambda _{l}t_{1}}\left( x^{0},\mathrm{\psi }%
_{2l}\right) \left( \mathrm{\phi }_{2Zl},\mathrm{\psi }_{1Zj}\right)
-\sum\limits_{k\in I_{Z}|I_{YZ}}e^{\lambda _{k}t_{1}}\left( x^{0},\psi
_{k}\right) \left( \varphi _{Zk},\mathrm{\psi }_{1Zj}\right) ,$ and again,
since the sequence $\left\{ \varphi _{2Zk,}k\in I_{Z}\right\} $ is a Riesz
basic of $Z,$ we obtain, that%
\begin{equation*}
\left( x_{Y}\left( t_{1}\right) ,\psi _{Yj}\right) =e^{\lambda jt_{1}}\left[
\left( x^{0},\mathrm{\psi }_{1j}\right) +\left( x^{0},\mathrm{\psi }%
_{2j}\right) c_{j}t_{1}\right] -\left( S_{Z}\left( t_{1}\right) x_{Z}^{0},%
\mathrm{\psi }_{1Zj}\right) .
\end{equation*}

\bigskip Using $b, t_{1}-\tau $ instead of $x^{0}, t_{1}$ in above
computations, we obtain :

%
%
%
%
%

if $j\in I_{Y}\backslash I_{YZ},$ then

$\left( PS\left( t_{1}-\tau \right) b,\psi _{j}\right) =e^{\lambda
_{j}\left( t_{1}-\tau \right) }b_{1j}-\left( S_{Z}\left( t_{1}\right)
b_{Z},\psi _{Zj}\right) ,$

if$~j\in I_{YZ},$ then

$\left( PS\left( t_{1}-\tau \right) b,\mathrm{\psi }_{1j}\right) =$

$=\left[ b_{1j}+b_{2j}c_{j}\left( t_{1}-\tau \right) \right] e^{\lambda
_{j}\left( t_{1}-\tau \right) }-\left( S_{Z}\left( t_{1}-\tau \right) b_{Z},%
\mathrm{\psi }_{1Zj}\right) .$

\bigskip

After integration in $\left[ 0,t_{1}\right] $ we obtain:

\medskip if $j\in I_{Y}\backslash I_{YZ},$ then

$\left(x_Y(t_1),\psi _{j}\right)=e^{\lambda _{j}t_{1}}\left( x^{0},\psi
_{j}\right) -\left( S_{Z}\left(t_{1}\right) x_{Z}^{0},\psi _{Zj}\right) +$

$+\int_{0}^{t_{1}}\left[ b_{1j}e^{\lambda _{j}\left( t_{1}-\tau \right)
}-\left( S_{Z}\left( t_{1}-\tau \right) b_{Z},\psi _{Zj}\right) \right]
u\left( \tau \right) d\tau ;$

\medskip if $j\in I_{YZ},$ then

$\left(x_Y(t_1),\psi _{j}\right)=\left[ \left( x^{0},\mathrm{\psi }%
_{1j}\right) +\left( x^{0},\mathrm{\psi }_{2j}\right) c_{j}t_{1}\right]
e^{\lambda _{j}t_{1}}-\left( S_{Z}\left( t_{1}\right) x_{Z}^{0},\mathrm{\psi 
}_{2Zj}\right) +$

$+\int_{0}^{t_{1}}\left( \left[ b_{1j}+b_{2j}c_{j}\left( t_{1}-\tau \right) %
\right] e^{\lambda _{j}\left( t_{1}-\tau \right) }-\left( S_{Z}\left(
t_{1}-\tau \right) b_{Z},\mathrm{\psi }_{2Zj}\right) \right) u\left( \tau
\right) d\tau .$

Hence, equation (\ref{CS1})--(\ref{P}) is partial controllable on $[0,t_1]$
if and only if the moment problem 
\begin{equation}
-\left( x^{0},\psi _{j}\right) +e^{-\lambda _{j}t_{1}}\left( S_{Z}\left(
t_{1}\right) x_{Z}^{0},\psi _{Zj}\right) =  \tag{LPRY}  \label{LPRY}
\end{equation}%
\begin{equation*}
=\int_{t_{0}}^{t_{1}}\left[ b_{1j}e^{-\lambda _{j}\tau }-e^{-\lambda
_{j}t_{1}}\left( S_{Z}\left( t_{1}-\tau \right) b_{Z},\psi _{Zj}\right) %
\right] u\left( \tau \right) d\tau , \\
j\in I_{Y}\backslash I_{YZ},  \notag
\end{equation*}

\begin{equation}
-\left( x^{0},\mathrm{\psi }_{1j}\right) -\left( x^{0},\mathrm{\psi }%
_{2j}\right) c_{j}t_{1}+e^{-\lambda _{j}t_{1}}\left( S_{Z}\left(
t_{1}\right) x_{Z}^{0},\mathrm{\psi }_{2Zj}\right) =  \notag
\end{equation}

\begin{equation}
=\int_{0}^{t_{1}}\left[ b_{1j}+b_{2j}c_{j}\left( t_{1}-\tau \right) \right]
e^{-\lambda _{j}\tau }u\left( \tau \right) d\tau -  \tag{LPRYZ}
\label{LPRYZ}
\end{equation}%
\begin{equation}
-\int_{0}^{t_{1}}e^{-\lambda _{j}t_{1}}\left( S_{Z}\left( t_{1}-\tau \right)
b_{Z},\mathrm{\psi }_{1Zj}\right) u\left( \tau \right) d\tau ,  \notag \\
j\in I_{YZ}.  \notag
\end{equation}
has a solution $u(\cdot)\in L_2[0,t_1]$.

Since the series (\ref{xg_z(t1)}) converges for any $x_{z}^{0}\in Z,$ the
sequence

$\left\{ \left( x^{0},\psi _{j}\right) -e^{-\lambda _{j}t_{1}}\left(
S_{Z}\left( t_{1}\right) x_{Z}^{0},\psi _{Zj}\right) ,j\in I_{Y}\backslash
I_{YZ}\right\} \in l^{2},$

\noindent and, in accordance with Lemma \ref{L2}, the sequence $\left\{
c_{j,}j\in I_{YZ}\right\} $ is bounded, so

$\left\{ \left( x^{0},\mathrm{\psi }_{1j}\right) +\left( x^{0},\mathrm{\psi }%
_{2j}\right) c_{j}t_{1}-e^{-\lambda _{j}t_{1}}\left( S_{Z}\left(
t_{1}\right) x_{Z}^{0},\mathrm{\psi }_{2Zm}\right) ,j\in I_{YZ}\right\} \in
l^{2}$ as well.

\noindent Hence \cite{Boaz}, \cite{Young98} above problem moment has a
solution if and only if family of exponential (\ref{-expg}) is strongly
minimal.

This proves the theorem.
\end{proof}

If $b\in Y,~$i.e. $b=\left( 
\begin{array}{c}
b_{Y} \\ 
0%
\end{array}%
\right) ,$ then moment problem (\ref{LPRY})--(\ref{LPRYZ}) may be simplified:

\begin{equation}
\left( x^{0},\psi _{j}\right) -e^{-\lambda _{j}t_{1}}\left( S_{Z}\left(
t_{1}\right) x_{Z}^{0},\psi _{Zj}\right) =  \tag{LPY}  \label{LPY}
\end{equation}%
\begin{equation}
=\int_{0}^{t_{1}}b_{1j}e^{-\lambda _{j}\tau }u\left( \tau \right) d\tau
,j\in I_{Y}\backslash I_{YZ}~,  \notag
\end{equation}%
\begin{equation}
\left( x^{0},\mathrm{\psi }_{1j}\right) +\left( x^{0},\mathrm{\psi }%
_{2j}\right) c_{j}t_{1}-e^{-\lambda _{j}t_{1}}\left( S_{Z}\left(
t_{1}\right) x_{Z}^{0},\mathrm{\psi }_{1Zj}\right) =  \tag{LPYZ}
\label{LPYZ}
\end{equation}%
\begin{equation}
=\int_{0}^{t_{1}}b_{1j}e^{-\lambda _{j}\tau }u\left( \tau \right) d\tau
,j\in I_{YZ},  \notag
\end{equation}%
or 
\begin{equation}
d_{j}=\int_{0}^{t_{1}}b_{1j}e^{-\lambda _{j}\tau }u\left( \tau \right) d\tau
,  \tag{LPYJ}  \label{LPYJ}
\end{equation}%
where

$d_{j}=\left\{ 
\begin{tabular}{l}
$\left( x^{0},\mathrm{\psi }_{1j}\right) -e^{-\lambda _{j}t_{1}}\left(
S_{Z}\left( t_{1}\right) x_{Z}^{0},\psi _{Zj}\right) ,~j\in I_{Y}\backslash
I_{YZ}$ \\ 
$\left( x^{0},\mathrm{\psi }_{1j}\right) +\left( x^{0},\mathrm{\psi }%
_{2j}\right) c_{j}t_{1}-e^{-\lambda _{j}t_{1}}\left( S_{Z}\left(
t_{1}\right) x_{Z}^{0},\mathrm{\psi }_{1Zj}\right) ,~j\in I_{YZ}.$%
\end{tabular}%
\right. $

Since series (\ref{xg_z(t1)}) converges for any $x_{Z}^{0}\in Z,$ we have $%
\left\{ d_{j}\in I_{Y}\right\} \in l^{2}.$

Hence the following corollaries are valid.

\begin{corollary}
\label{C4} If series (\ref{xg_z(t1)})~converges for any $x_{Z}^{0}\in Z,$
then system (\ref{CS1})--(\ref{P})) is partially null controllable on $%
[0,t_{1}]$~ if and only if the family of exponent%
\begin{equation}
\left\{ e^{-\lambda _{j}t}b_{1j},j\in I_{Y},t\in \left[ 0,t_{1}\right]
\right\}  \tag{-expg0}  \label{-expg0}
\end{equation}%
is strongly minimal.
\end{corollary}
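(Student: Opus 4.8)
The plan is to specialize the moment-problem characterization already established in Theorem \ref{T5} to the case $b\in Y$, in which the control no longer feeds into the $Z$-component. First I would set $b_Z=0$ in the general moment problem (\ref{LPRY})--(\ref{LPRYZ}); every term of the form $\left(S_Z(t_1-\tau)b_Z,\cdot\right)$ then vanishes, and the two families of equations collapse into the single family (\ref{LPYJ}),
\begin{equation*}
d_j=\int_0^{t_1}b_{1j}e^{-\lambda_j\tau}u(\tau)\,d\tau,\qquad j\in I_Y,
\end{equation*}
with $d_j$ given by the displayed piecewise right-hand side (the $c_jt_1$ correction appearing only on $I_{YZ}$). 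By Theorem \ref{T5}, partial null controllability of (\ref{CS1})--(\ref{P}) is then equivalent to the solvability in $L_2[0,t_1]$ of this moment problem for every right-hand side arising from some $x^0\in X$.

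Next I would verify that these right-hand sides fill out all of $\ell^2$. On one hand, convergence of series (\ref{xg_z(t1)}) together with the boundedness of $\{c_j:j\in I_{YZ}\}$ from Lemma \ref{L2} guarantees $\{d_j:j\in I_Y\}\in\ell^2$, as noted just before the statement. On the other hand, fixing $x_Z^0$ leaves all $S_Z$- and $\psi_{2j}$-contributions to $d_j$ frozen, while the leading term $\left(x_Y^0,\psi_{1Yj}\right)$ varies freely; since the adjoint eigenvector family is a Riesz basis of $Y$ by Lemma \ref{L1} (which holds in the case $\sigma_{YZ}\neq\varnothing$ as well), the coefficient map $x_Y^0\mapsto\{(x_Y^0,\psi_{1Yj})\}$ is a bounded bijection onto $\ell^2$. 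Hence, as $x^0$ ranges over $X=Y\times Z$, the sequence $\{d_j\}$ realizes every element of $\ell^2$.

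Finally I would invoke the Riesz--Fisher characterization of strong minimality: the moment problem $d_j=\int_0^{t_1}e^{-\lambda_j\tau}b_{1j}\,u(\tau)\,d\tau$ admits a solution $u\in L_2[0,t_1]$ for \emph{every} $\{d_j\}\in\ell^2$ precisely when the family $\{e^{-\lambda_jt}b_{1j},\,j\in I_Y,\,t\in[0,t_1]\}$ of (\ref{-expg0}) is a Riesz--Fisher sequence, i.e.\ strongly minimal (cf.\ \cite{Boaz},\cite{Young98}). Combining this equivalence with the reduction of the first step yields the corollary. I do not anticipate a genuine obstacle, since the analytic work is carried by Theorem \ref{T5}; the only point requiring care is the surjectivity onto $\ell^2$ of the coefficient map, which is exactly where the Riesz-basis property of the adjoint eigenvectors (Lemma \ref{L1}) is essential, and which forces strong minimality in the ``only if'' direction.
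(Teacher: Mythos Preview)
Your proposal is correct and follows essentially the same route as the paper: specialize the moment problem from Theorem~\ref{T5} to $b_Z=0$, obtain the simplified system (\ref{LPYJ}), use convergence of (\ref{xg_z(t1)}) and Lemma~\ref{L2} to place $\{d_j\}$ in $\ell^2$, and then invoke the Riesz--Fisher characterization of strong minimality. You are in fact more careful than the paper on one point: you explicitly argue surjectivity of $x^0\mapsto\{d_j\}$ onto $\ell^2$ via the Riesz-basis property of $\{\psi_{Yj}\}$ (Lemma~\ref{L1}), which is needed for the ``only if'' direction but which the paper leaves implicit when it simply writes ``Hence the following corollaries are valid.''
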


\begin{corollary}
\label{C5} If the series (\ref{xg_z(t1)} converges for any $x_{Z}^{0}\in Z,$
then system (\ref{CS1})--(\ref{P}) is partially null controllable on $%
[0,t_{1}]$~ if and only if equation%
\begin{equation*}
\dot{y}\left( t\right) =A_{Y}y\left( t\right) +b_{Y}u\left( t\right)
\end{equation*}%
is null (completely) controllable on $[0,t_{1}].$
\end{corollary}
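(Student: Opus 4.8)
The plan is to repeat the argument of Corollary \ref{C2} in the coupled setting $\sigma_{YZ}\neq\varnothing$. Since $b=\left(\begin{array}{c}b_{Y}\\0\end{array}\right)\in Y$, the moment problem governing partial null-controllability has already been collapsed, just above the statement, to the single scalar family (\ref{LPYJ}),
\begin{equation*}
d_{j}=\int_{0}^{t_{1}}b_{1j}e^{-\lambda _{j}\tau }u\left( \tau \right) d\tau ,\quad j\in I_{Y},
\end{equation*}
with the two-case data $d_{j}$ displayed there. The first step is to notice that, because $b_{Z}=0$, the coefficients $b_{1j}=(b,\psi _{j})$ for $j\in I_{Y}\backslash I_{YZ}$ and $b_{1j}=(b,\mathrm{\psi }_{1j})$ for $j\in I_{YZ}$ reduce to $b_{1j}=(b_{Y},\psi _{Yj})$ and $b_{1j}=(b_{Y},\mathrm{\psi }_{1Yj})$ respectively; these are exactly the control coefficients of the reduced equation $\dot{y}=A_{Y}y+b_{Y}u$. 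Note also that the relevant $Y$-components $\psi _{Yj}$ (resp. $\mathrm{\psi }_{1Yj}$) are genuine eigenvectors of $A_{Y}^{\ast }$, so the generalized-eigenvector phenomena live entirely in the coupling and the reduced problem stays ``clean''.

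Next I would write out the moment problem for complete null-controllability of the reduced equation. Using $S_{Y}(t)^{\ast }\psi _{Yj}=e^{\bar{\lambda}_{j}t}\psi _{Yj}$, the condition $y(t_{1})=0$ for $\dot{y}=A_{Y}y+b_{Y}u$ with $y(0)=y^{0}$ is equivalent, after pairing with $\psi _{Yj}$ and dividing by $e^{\lambda _{j}t_{1}}$, to solvability for every $y^{0}\in Y$ of
\begin{equation*}
-\left( y^{0},\psi _{Yj}\right) =\int_{0}^{t_{1}}b_{1j}e^{-\lambda _{j}\tau }u\left( \tau \right) d\tau ,\quad j\in I_{Y},
\end{equation*}
which carries the \emph{identical} integral operator $u\mapsto \{\int_{0}^{t_{1}}b_{1j}e^{-\lambda _{j}\tau }u\,d\tau \}$ as (\ref{LPYJ}). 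Thus the two controllability properties differ only in their admissible right-hand sides, and it remains to show these data sets coincide. By convergence of (\ref{xg_z(t1)}) together with the boundedness of $\{c_{j},j\in I_{YZ}\}$ from Lemma \ref{L2}, the sequence $\{d_{j},j\in I_{Y}\}$ lies in $\ell ^{2}(I_{Y})$ for every $x^{0}\in X$. Conversely, putting $x_{Z}^{0}=0$ annihilates the $S_{Z}$-terms and, since $\mathrm{\psi }_{2j}=\left(\begin{array}{c}0\\\psi _{2Zj}\end{array}\right)$ has vanishing $Y$-component, also the $c_{j}t_{1}$-terms, leaving $d_{j}=(x_{Y}^{0},\psi _{Yj})$ (resp. $(x_{Y}^{0},\mathrm{\psi }_{1Yj})$). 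Because the family of $Y$-components of the adjoint eigenvectors is a Riesz basis of $Y$ (Lemma \ref{L1}, valid in this case as noted after (\ref{YZ})), the map $x_{Y}^{0}\mapsto \{d_{j}\}$ is already onto $\ell ^{2}(I_{Y})$, hence so is $x^{0}\mapsto \{d_{j}\}$, and this is precisely the range of $y^{0}\mapsto \{-(y^{0},\psi _{Yj})\}$.

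Combining the two displays, (\ref{LPYJ}) is solvable for all $x^{0}\in X$ if and only if the reduced moment problem is solvable for all $y^{0}\in Y$; that is, partial null-controllability of (\ref{CS1})--(\ref{P}) is equivalent to complete null-controllability of $\dot{y}=A_{Y}y+b_{Y}u$ on $[0,t_{1}]$, which is the assertion. Alternatively one may simply juxtapose Corollary \ref{C4} with the standard characterization of complete null-controllability of the reduced equation by strong minimality of the same family (\ref{-expg0}). I expect the one genuinely delicate point to be the surjectivity of $x^{0}\mapsto \{d_{j}\}$ onto $\ell ^{2}(I_{Y})$: it is here that both hypotheses are used in tandem — the convergence of (\ref{xg_z(t1)}) to keep the data in $\ell ^{2}$, and the Riesz-basis property to recover an arbitrary $\ell ^{2}$ target — and where the generalized-eigenvector bookkeeping ($c_{j}$ and $\mathrm{\psi }_{2j}$) must be verified not to spoil the matching.
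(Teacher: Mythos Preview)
Your proposal is correct and follows essentially the same route as the paper: the paper does not spell out a separate proof of Corollary~\ref{C5} but declares it (together with Corollary~\ref{C4}) to follow from the reduced moment problem (\ref{LPYJ}) and the fact that $\{d_{j}\}\in\ell^{2}$, with the remark that the arguments for Corollaries~\ref{C1}--\ref{C3} carry over verbatim. Your write-up is a faithful, more careful version of the proof of Corollary~\ref{C2} adapted to the presence of generalized eigenvectors, including the observation that $x_{Z}^{0}=0$ kills both the $S_{Z}$-terms and the $c_{j}t_{1}$-terms, so that the Riesz-basis surjectivity argument of Lemma~\ref{L1} applies exactly as before.
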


Theorems \ref{T2}--\ref{T3} and Corollaries \ref{C1}--\ref{C3} hold true for 
$\sigma _{YZ}\neq \varnothing $ as well.

\subsection{ Example 2}

%
%

The authors of \cite[system (12), page 4]{AmmarKhodja} have proved the
partial controllability conditions for the two-dimensional system 
\begin{equation}
\left\{ 
\begin{array}{c}
y_{t}^{\prime }=\triangle y+\alpha z+1_{\omega }u, \\ 
z_{t}^{\prime }=\triangle z%
\end{array}%
\right. ~\left( t,x\right) \in Q_{T}=\left[ 0,T\right] \times Q,  \tag{AMKho}
\label{AMKho}
\end{equation}%
\begin{equation*}
\begin{array}{c}
y\left( 0,x\right) =y_{0}\left( x\right) ,z\left( 0,x\right) =z_{0}\left(
x\right) ,x\in Q, \\ 
y\left( t,x\right) =z\left( t,x\right) =0,~t\in \left[ 0,T\right] ,x\in
\partial Q,%
\end{array}%
\end{equation*}%
where $Q$ is a bounded domain in $\mathbb{R}$ with a $C^{2}$-boundary $%
\partial Q,~\omega $ is a non-empty open subset of $Q$ and $T>0$,

$y_{0}\left( \cdot \right) ,z_{0}\left( \cdot \right) \in L_{2}(Q),u\left(
\cdot ,\cdot \right) \in L_{2}\left( Q_{T}\right) ,\alpha \left( \cdot
,\cdot \right) \in L_{\infty }\left( Q_{T},\mathcal{L}(\mathbb{R}\right) ).$

As it is noted in \cite{AmmarKhodja}, controllability of such partial
parabolic control systems are investigated in many fields such as chemistry,
physics or biology.

However, even if $u(t,x)\in \mathbb{R}$ for all $t\in \lbrack 0,T]$ and $%
x\in Q$, the control $u(\cdot ,\cdot )\in L(Q_{T})$ cannot be considered as
a single control force\footnote{%
it is infinite-dimensional control}. The given paper focuses on achieving
controllability using a single control force, so we will define $u(t,x)$ in
the form $\cdot )\in L(Q)$ is predetermined and $u(t),u(\cdot )\in L([0,T],%
\mathbb{R})$ represents a single control force.

The system being studied in this section is 
\begin{equation}
\left\{ 
\begin{array}{c}
y_{t}^{\prime }=\triangle y+\alpha z+\hat{b}u, \\ 
z_{t}^{\prime }=\triangle z,%
\end{array}%
\right. ~  \tag{exmp3}  \label{exmp3}
\end{equation}%
\begin{equation*}
\left( t,x\right) \in Q_{T}=\left[ 0,T\right] \times \left[ 0,\pi \right]
,~\alpha \in \mathbb{R}\backslash \{0\},\hat{b}\left( \cdot \right) \in
L_{2}\left( \left[ 0,\pi \right] ,\mathbb{R}\right) ,u\left( \cdot \right)
\in L_{2}\left[ 0,T\right],
\end{equation*}%
\begin{equation}
y\left( 0,x\right) =y_{0}\left( x\right) ,z\left( 0,x\right) =z_{0}\left(
x\right) ,x\in \lbrack 0,\pi ],  \tag{inc3}  \label{inc3}
\end{equation}%
\begin{equation}
y\left(t,0\right) =z\left( t,0\right) =0,~t\in \left[ 0,T\right],  \tag{buc3}
\label{buc3}
\end{equation}
\begin{equation*}
{y_0}\left( \cdot \right), {z_0}\left( \cdot \right) \in L_{2}\left( \left[
0,\pi \right]\right).
\end{equation*}
One can write system (\ref{exmp3})--(\ref{buc3}) in form (\ref{CS1})--(\ref%
{P}), where

$A=\left( 
\begin{array}{cc}
\triangle & \alpha \\ 
0 & \triangle%
\end{array}%
\right) ,b=\left( 
\begin{array}{c}
\hat{b} \\ 
0%
\end{array}%
\right) ,~P=\left( 
\begin{array}{cc}
1 & 0 \\ 
0 & 0%
\end{array}%
\right) .$

Here

$Y=\left\{ \left( 
\begin{array}{c}
y\left( x\right) \\ 
0%
\end{array}%
\right) ,y\left( \cdot \right) \in L_{2}\left[ 0,\pi \right] \right\} ,
Z=\left\{ \left( 
\begin{array}{c}
0 \\ 
z\left( x\right)%
\end{array}%
\right) ,z\left( \cdot \right) \in L_{2}\left[ 0,\pi \right] \right\} ,$

\begin{equation*}
A_{Y}=A_{Z}=\triangle
\end{equation*}%
with domains \cite[page 214, Example 4.6.6, page 241, Example 4.10.1]%
{Balakrishnan}

$D\left( A_{Y}\right) =D\left( A_{Z}\right) =\left\{ f,f^{\prime }~\mathrm{%
a.c.:}f^{\prime },f^{\prime \prime }\in L_{2}\left[ 0,\pi \right] \mathrm{%
~~and~}f\left( 0\right) =f\left( \pi \right) =0\right\}$ , \newline
$b_{Y}=\hat{b}, b_{Z}=0.$

Let $\lambda _{i}~$and $\varphi _{i},i\in \mathbb{N}~$ be eigenvalues and
eigenvectors of the operator $\Delta $ with boundary conditions (\ref{inc3}%
). Obviously, eigenvalues $\lambda_{n}=-n^{2},n=1,2,...,$ of $\triangle $
are eigenvalues of $A$, and to every eigenvalues $\lambda _{n}$ correspond
the eigenvectors $\phi _{1n}=\sqrt{\frac{2}{\pi }}\left( 
\begin{array}{c}
\sin nx \\ 
0%
\end{array}%
\right) ,n=1,2,...$ of the operator $A$, and the eigenvectors $\psi _{2n}=%
\sqrt{\frac{2}{\pi }}\left( 
\begin{array}{c}
0 \\ 
\sin nx%
\end{array}%
\right) $ of the operator $A^{\ast }.$ In this example both operator
equations (\ref{AYZG}) and (\ref{AYZG}*) are reduced to the equation%
\begin{eqnarray*}
y^{\prime \prime }+\alpha \sqrt{\frac{2}{\pi }}\sin nx &=&-n^{2}y+c_{n}\sqrt{%
\frac{2}{\pi }}\sin nx, \\
y\left( 0\right) &=&y\left( \pi \right) =0.
\end{eqnarray*}%
where according to (\ref{c}) we obtain 
\begin{equation*}
c_{n}=\alpha \int_{0}^{\pi }\frac{2}{\pi }\sin ^{2}nxdx=\alpha \neq 0,n\in 
\mathbb{N}.
\end{equation*}%
Hence equations $y^{\prime \prime }+\alpha \sqrt{\frac{2}{\pi }}\sin
nx=-n^{2}y+c_{n}\sqrt{\frac{2}{\pi }}\sin nx,~n\in \mathbb{N},$ are
equivalent to equations 
\begin{equation}
y^{\prime \prime }=-n^{2}y~,n\in \mathbb{N},y\left( 0\right) =\left( \pi
\right) =0,  \tag{geAA*}  \label{geAA*}
\end{equation}%
so the vector 
\begin{equation*}
\phi _{2n}=\left( 
\begin{array}{c}
\phi _{2Yn}\left( x\right) \\ 
\sqrt{\frac{2}{\pi }}\sin nx%
\end{array}%
\right) ,x\in \left[ 0,\pi \right]
\end{equation*}%
is a generalized eigenvector of $A,$ and the vector 
\begin{equation*}
\psi _{1n}=\left( 
\begin{array}{c}
\sqrt{\frac{2}{\pi }}\sin nx \\ 
\psi _{1Zn}\left( x\right)%
\end{array}%
\right) ,x\in \left[ 0,\pi \right]
\end{equation*}%
is a generalized eigenvector of the operator $A^{\ast },$ where $\phi
_{2Yn}\left( x\right) $ and $\psi _{1Zn}\left( x\right) $ are solutions of
equation (\ref{geAA*}). For any $n\in \mathbb{N}$ equation (\ref{geAA*}) has
the infinite number of solutions. Let's choose the solutions $\phi _{2Yn}=-%
\sqrt{\frac{2}{\pi }}\sin nx~$and $\psi _{1Zn}\left( x\right) =\sqrt{\frac{2%
}{\pi }}\sin nx,$ so the vector 
\begin{equation*}
\phi _{2n}=\left( 
\begin{array}{c}
-\sqrt{\frac{2}{\pi }}\sin nx \\ 
\sqrt{\frac{2}{\pi }}\sin nx%
\end{array}%
\right) ,x\in \left[ 0,\pi \right]
\end{equation*}%
is a generalized eigenvector of the operator $A,$ and the vector 
\begin{equation*}
\psi _{1n}=\left( 
\begin{array}{c}
\sqrt{\frac{2}{\pi }}\sin nx \\ 
\sqrt{\frac{2}{\pi }}\sin nx%
\end{array}%
\right) ,x\in \left[ 0,\pi \right]
\end{equation*}%
is a generalized eigenvector of the operator $A^{\ast }$, corresponding to
the eigenvalue $\lambda _{n}=-n^{2},n\in \mathbb{N}.$ 

One can prove ( see \cite{AmmarKhodjaChoulyDuprez} and Example 1), that the
sequences

\begin{equation}
\left\{ \phi _{1n},\phi _{2n},n\in \mathbb{N}\right\} ,  \tag{B3}  \label{B3}
\end{equation}
and 
\begin{equation}
\left\{ \psi _{1n},\psi _{2n},n\in \mathbb{N}\right\} ,  \tag{B3*}
\label{B3*}
\end{equation}
are Riesz basic in $X=L_{2}\left( \left[ 0,\pi \right] ,\mathbb{R}%
^{2}\right) .$ 
Hence system (\ref{exmp3})--(\ref{buc3}) satisfies to conditions of system (%
\ref{CS1})--(\ref{P})\footnote{%
See the bottom of Pages 2 and 3.}, so all theorems an corollaries proven in
previous sections, are applicable to Example 2.

Denote $b_{n}=\left( \hat{b},\sqrt{\frac{2}{\pi}}\mathrm{\sin }nx\right) = 
\sqrt{\frac{2}{\pi}}\dint_{0}^{\pi }\hat{b}\left( x\right) \sin nxdx,n\in 
\mathbb{N}.$

From (\ref{LPY})--(\ref{LPYZ}) it follows, that\footnote{%
Here $I_{Y}\backslash I_{YZ}$ is an empty set.} 
\begin{equation}
\left( x^{0},\mathrm{\psi }_{1n}\right) +\left( x^{0},\mathrm{\psi }%
_{2n}\right) \alpha t_{1}-e^{n^{2}t_{1}}\left( S_{Z}\left( t_{1}\right)
x_{Z}^{0},\mathrm{\psi }_{1Zn}\right) =\int_{0}^{t_{1}}b_{n}e^{n^{2}\tau
}u\left( \tau \right) d\tau ,n\in \mathbb{N},  \tag{LMPYZ}  \label{LMPYZ}
\end{equation}
where $\mathrm{\psi }_{1Zn}=\sqrt{\frac{2}{\pi }}\sin nx, n\in \mathbb{N}$.

Now we will check the convergence of series (\ref{xg_z(t1)}).

We have

$S_{Z}\left( t_{1}\right) x_{Z}^{0}=\sum_{m=1}^{\infty
}x^0_{Zm}e^{-m^{2}t_{1}} \sqrt{\frac{2}{\pi }}\sin mxdx,0\leq x \leq \pi ,$
where

$x_{Zm}^{0}=\int_{0}^{\pi }x_{Z}^{0}\left( \theta \right) \sqrt{\frac{2}{\pi 
}}\sin m\theta d\theta $,


$(S_{Z}\left( t_{1}\right) x_{Z}^{0},\mathrm{\psi }_{1Zn})=\sum_{m=1}^{%
\infty }e^{-m^{2}t_{1}}x_{Zm}^{0}\dint_{0}^{\pi }\left( \sqrt{\frac{2}{\pi }}%
\sin mx\sqrt{\frac{2}{\pi }}\sin nx\right) dx=x_{Zn}^{0}e^{-n^{2}t_{1}},$

$\sum_{n=1}^{\infty }e^{2n^{2}t_{1}}\left\vert \left( S_{Z}\left(
t_{1}\right) x_{Z}^{0},\psi _{Zn}\right) \right\vert ^{2}=\sum_{n=1}^{\infty
}e^{2n^{2}t_{1}}e^{-2n^{2}t_{1}}\left\vert x_{Zn}^{0}\right\vert
^{2}=\left\Vert x_{Z}^{0}\right\Vert ^{2}<+\infty .\forall t_{1}>0.$

Hence from Corollary \ref{C4} it follows, that system (\ref{exmp3})--(\ref%
{buc3}) is partially null controllable on $\left[ 0,t_{1}\right] $ if and
only if the family

\begin{equation}
\left\{ e^{n^{2}t}b_{n},n\in \mathbb{N},t\in \left[ 0,t_{1}\right] \right\} 
\tag{-expn2}  \label{-expn2}
\end{equation}%
is strongly minimal.

The eigenvalues $\lambda _{n}=-n^{2}, n\in \mathbb{N}$ satisfy he sufficient
conditions for strong minimality of family (\ref{-expn2}) , obtained in
Theorem \ref{T3} with $\rho =1$. 
In accordance with Theorem \ref{T3} and Corollary \ref{C3} system (\ref%
{exmp3})--(\ref{buc3}) is partially null controllable on $\left[0,t_{1}%
\right] $ for any $t_{1}>\lim_{n\rightarrow \infty } \frac{2\ln \left\vert
b_{n}\right\vert }{-n^{2}}$.

\section{Conclusion}

%

The partial exact null-controllability conditions for abstract coupled
control equations (\ref{e1})--(\ref{e3}) governed by a single distributed
control $u(t)$ have been obtained using the moment problem approach. These
conditions strengthen the partial controllability conditions for one of
coupled parabolic systems obtained in Ammar Khodja et al. (2016). This is
because, as demonstrated in Example 2, even when $u(t,x)\in \mathbb{R}$ for
all $t\in [0,T]$ and $x\in Q$, the control $u(\cdot,\cdot)\in L(Q)$ cannot
be treated as a single control force in the conventional sense\footnote{%
it is infinite-dimensional control}.

The controllability results to date have focused on control problems for
coupled PDEs, mostly of the same structure. The majority of results have
been obtained for coupled parabolic equations, and a few have been done for
the controllability of coupled wave equations. The abstract approach
developed here allows us to investigate various controllability problems,
including partial controllability, for coupled systems containing equations
of different structures.

All examples are simple; they have been chosen only for illustrative
purposes.


\end{document}